\newtheorem{Definition}{Definition}
\newtheorem{Proposition}{Proposition}
\newtheorem{Lemma}{Lemma}
\newtheorem{Remark}{Remark}
\begin{document}
	\title
	[Generalized Heisenberg-Virasoro algebra]
	{Generalized Heisenberg-Virasoro algebra and matrix models from quantum algebra}

	\author{Fridolin Melong$^{\dagger}$  \and  Raimar Wulkenhaar$^{\ddagger}$}
	
	\address{ Department of Mathematics, Faculty of Sciences,\\ University of Yaounde 1,
		P.O.Box 812, Yaounde, Cameroon \\ and International Chair in Mathematical Physics
		and Applications
		(ICMPA-UNESCO Chair) \\
	 University of Abomey-Calavi,
 072 B.P. 50 Cotonou, Republic of Benin
		{\itshape e-mail:} \normalfont  
		\texttt{fridomelong@gmail.com}}
	
	\address{Mathematisches Institut der
		Westfälischen Wilhelms-Universit\"at, \newline
		Einsteinstr.\ 62, 48149 M\"unster, Germany, 
		{\itshape e-mail:} \normalfont
		\texttt{raimar@math.uni-muenster.de}}
\begin{abstract}
  In this paper, we construct the Heisenberg-Virasoro algebra in the
  framework of the $\mathcal{R}(p,q)$-deformed quantum
  algebras. Moreover, the $\mathcal{R}(p,q)$-Heisenberg-Witt $n$-algebras is also investigated. Furthermore, we generalize the notion
  of the elliptic hermitian matrix models. We use the constraints to
  evaluate the $\mathcal{R}(p,q)$-differential operatos of the
  Virasoro algebra and generalize it to higher order differential
  operators. Particular cases corresponding to quantum algebras
  existing in literature are deduced.
\end{abstract}
\subjclass[2020]{17B37, 17B68, 81R10}
\keywords{$\mathcal{R}(p,q)$- calculus, quantum algebra, Heisenberg-Virasoro algebra, toy model, matrix models.}
\maketitle

\section{Introduction}
Quantum algebras introduced by Drinfeld are used both by
mathematicians {and} physicists \cite{Drinfeld}. They {relate} to the
quantum Yang-Baxter equation which plays an important role in many
areas such as solvable lattice models, conformal field theory and
quantum integrable systems \cite{Drinfeld1}. From the mathematical
point of view, quantum algebras are Hopf algebras and generalizations
of Lie algebras \cite{CSX, CS1}.

Hounkonnou et {\it al} generalized Virasoro algebra,  relatively to their left-symmetry structure, presented related algebraic and some
hydrodynamic properties \cite{Hounkonnou:2015laa}.
The $q$-deformed Heisenberg-Virasoro algebra which is a Hom-Lie
algebra
 was constructed by Chen and
Su. The central extensions and second cohomology group were also
presented \cite{CS}.  The super $q$-deformed Virasoro $n$-algebra
for $n$ even and a toy model for the $q$-deformed Virasoro
constraints were investigated by Nedelin and Zabzine on the $q$-Virasoro constraints for a toy model \cite{NZ}.

The $\mathcal{R}(p,q)$-deformed quantum algebras and particular cases
corresponding {to} quantum algebras known in the literature were
investigated {in} \cite{HB1}.  Furthermore,
in \cite{HM}, the
$\mathcal{R}(p,q)$-deformed conformal Virasoro algebra was presented,  the
$\mathcal{R}(p,q)$-deformed Korteweg-de Vries equation for a
conformal dimension $\Delta=1,$ was derived, and  the energy-momentum
tensor induced by the ${\mathcal R}(p,q)$-quantum algebras for the
conformal dimension $\Delta=2$ was characterized.

The generalizations of Witt and Virasoro algebras, and the Korteweg-de
Vries equations from known $\mathcal{R}(p,q)$-deformed quantum
algebras were performed. The
$\mathcal{R}(p,q)$-deformed Witt $n$-algebra and the Virasoro
constraints for a toy model were constructed and determined
\cite{HMM}.

Furthermore, the $\mathcal{R}(p,q)$-deformed super Virasoro algebra
and the super $\mathcal{R}(p,q)$-Virasoro $n$-algebra ($n$ even )
were constructed. Moreover, a toy model for the super
$\mathcal{R}(p,q)$-Virasoro constraints was discussed. Particular
cases induced from the quantum algebras known in the literature were
deduced \cite{melong2022}.

Recently, the deformations of the matrix models are investigated by many authors: The definition of the deformation of the elliptic  $q,t$ matrix model was introduced by Mironov and Morozov \cite{Mironov1}.

Motivated by theses notions, the following question arises: How to generalize the Heisenberg-Virasoro algebra and  matrix models from the $\mathcal{R}(p,q)$-deformed quantum 	algebra? 

This paper is organized as follows: In section $2,$ the notion
concerning the $\mathcal{R}(p,q)$-calculus \cite{HB}, the
$\mathcal{R}(p,q)$-deformed quantum algebras \cite{HB1}, and matrix
models are recalled.  Section $3$ is devoted to construct the Heisenberg
Virasoro algebra in the framework of generalized quantum algebras
\cite{HB1}.  Section $4$ is reserved to some applications: more
precisely, the $\mathcal{R}(p,q)$-deformed Heisenberg Witt $n$-algebras is investigated. The Heisenberg Virasoro constraints {are} used
to {present} a toy model. The $\mathcal{R}(p,q)$-deformed matrix model
is determined and the generalized elliptic matrix model is
furnished. Particular cases are deduced.  We end {with} concluding
remarks in section $5.$

\section{Preliminaries}
In this section, we fix the notations and recall some definitions and known results   useful in the sequel($\mathcal{R}(p,q)$-calculus, $\mathcal{R}(p,q)$-quantum algebras and matrix models). We start by the $\mathcal{R}(p,q)$-calculus and quantum algebra.

For that, let $ p$ and $q,$ {be} two positive real numbers such
that $ 0<q<p< 1,$ and a meromorphic function defined on
$\mathbb{C}\times\mathbb{C}$ by \cite{HB}:
\begin{equation}\label{r10}
  \mathcal{R}(s,t)= \sum_{u,v=-\eta}^{\infty}r_{uv}s^u\,t^v,
\end{equation}
where $r_{uv}$ are real
numbers {and} $\eta\in\mathbb{N}\cup\left\lbrace 0\right\rbrace,$
{such that} $\mathcal{R}(p^x,q^x)>0, \forall x\in\mathbb{N},$ and
$\mathcal{R}(1,1)=0$ by definition.  We denote by $\mathbb{D}_{R}$ the 
bidisk \begin{eqnarray*}
	\mathbb{D}_{R}
	=\left\lbrace v=(v_1,v_2)\in\mathbb{C}^2: |v_j|<R_{j} \right\rbrace,
\end{eqnarray*}
where $R$ is the convergence radius of the series (\ref{r10}) defined by Hadamard formula as follows \cite{TN}:
\begin{eqnarray*}
	\limsup_{s+t \longrightarrow \infty} \sqrt[s+t]{|r_{st}|R^s_1\,R^t_2}=1.
\end{eqnarray*}

We define the  $\mathcal{R}(p,q)$-numbers  \cite{HB}:
\begin{eqnarray}\label{rpqnumbers}
[n]_{\mathcal{R}(p,q)}:=\mathcal{R}(p^n,q^n),\quad n\in\mathbb{N},
\end{eqnarray}
the
$\mathcal{R}(p,q)$-factorials by
\begin{eqnarray*}\label{s0}
	[n]!_{\mathcal{R}(p,q)}:=\left \{
	\begin{array}{l}
		1\quad\mbox{for}\quad n=0\\
		\\
		\mathcal{R}(p,q)\cdots\mathcal{R}(p^n,q^n)\quad\mbox{for}\quad n\geq 1,
	\end{array}
	\right .
\end{eqnarray*}
and the  $\mathcal{R}(p,q)$-binomial coefficients
\begin{eqnarray*}\label{bc}
	\bigg[\begin{array}{c} m  \\ n\end{array} \bigg]_{\mathcal{R}(p,q)} := \frac{[m]!_{\mathcal{R}(p,q)}}{[n]!_{\mathcal{R}(p,q)}[m-n]!_{\mathcal{R}(p,q)}},\quad m,n\in\mathbb{N}\cup\{0\},\quad m\geq n.
\end{eqnarray*}

We denote by 
${\mathcal O}(\mathbb{D}_R)$ the set of holomorphic functions defined
on $\mathbb{D}_R$ and 
consider the following linear operators defined on  $\mathcal{O}(\mathbb{D}_{R}),$ (see \cite{HB1} for more details),
\begin{eqnarray*}
	\; P:\Psi\longmapsto P\Psi(z):&=& \Psi(pz),\\
	\;Q:\Psi\longmapsto Q\Psi(z):&=& \Psi(qz),
\end{eqnarray*}
and the $\mathcal{R}(p,q)$-derivative
\begin{eqnarray}\label{r5}
	{\mathcal D}_{\mathcal{R}( p,q)}:={\mathcal D}_{p,q}\frac{p-q}{P-Q}\mathcal{R}( P,Q)=\frac{p-q}{p^{P}-q^{Q}}\mathcal{R}(p^{P},q^{Q}){\mathcal D}_{p,q}
\end{eqnarray}
where ${\mathcal D}_{p,q}$ is the $(p,q)$-derivative:
\begin{eqnarray*}
	{\mathcal D}_{p,q}\Psi(z):=\frac{\Psi(pz)-\Psi(qz)}{z(p-q)}.
\end{eqnarray*}
The  algebra associated with the $\mathcal{R}(p,q)$-deformation is a quantum algebra, denoted $\mathcal{A}_{\mathcal{R}(p,q)},$ generated by the set of operators $\{1, A, A^{\dagger}, N\}$ satisfying the following commutation relations:
\begin{eqnarray*}
	&& \label{algN1}
	\quad A A^\dag= [N+1]_{\mathcal {R}(p,q)},\quad\quad\quad A^\dag  A = [N]_{\mathcal {R}(p,q)}.
	\cr&&\left[N,\; A\right] = - A, \qquad\qquad\quad \left[N,\;A^\dag\right] = A^\dag
\end{eqnarray*}
with the realization on  ${\mathcal O}(\mathbb{D}_R)$ given by:
\begin{eqnarray*}\label{algNa}
	A^{\dagger} := z,\qquad A:=\partial_{\mathcal {R}(p,q)}, \qquad N:= z\partial_z,
\end{eqnarray*} 
where $\partial_z:=\frac{\partial}{\partial z}$ is the  derivative on $\mathbb{C}.$

This algebra is the generalization of quantum algebras existing in the literature as follows:
\begin{itemize}
	\item[(i)]
	Taking $\mathcal{R}(x,1)=\frac{x-1}{q-1},$
	we obtain the $q$-deformed number, derivative  and the quantum algebra corresponding to the  {\bf Arick-Coon-Kuryskin algebra} \cite{AC}:
	\begin{eqnarray*}
		[n]_{q}=\frac{q^n-1}{q-1},\quad {\mathcal D}_{q}\Psi(z):=\frac{\Psi(qz)-\Psi(z)}{z(q-1)}
	\end{eqnarray*}
	and 
	\begin{eqnarray*}
		&& 
		\quad \left[N,\; A\right] = - A, \qquad\qquad\quad \left[N,\;A^\dag\right] = A^\dag.
		\cr&& A\,A^\dag-qA^\dag\,A=1 \quad \mbox{or}\quad A\,A^\dag-A^\dag\,A=q^{N}.
	\end{eqnarray*} 
	\item[(ii)]  The {\bf Biedenharn-Macfarlane algebra}\cite{B,M}, derivative and numbers can be obtained by putting $\mathcal{R}(x)=\frac{x-x^{-1}}{q-q^{-1}}:$
	\begin{eqnarray*}
		[n]_{q}=\frac{q^n-q^{-n}}{q-q^{-1}},\quad {\mathcal D}_{q}\Psi(z):=\frac{\Psi(qz)-\Psi(q^{-1}z)}{z(q-q^{-1})}
	\end{eqnarray*}
	and 
	\begin{eqnarray*}
		&& 
		\quad \left[N,\; A\right] = - A, \qquad\qquad\quad \left[N,\;A^\dag\right] = A^\dag.
		\cr&& A\,A^\dag-qA^\dag\,A=q^{-N}\quad\mbox{or}\quad A\,A^\dag-q^{-1}A^\dag\,A=q^{N},\quad q^2\neq 1 .
	\end{eqnarray*} 
	\item[(iii)]Setting $\mathcal{R}(x,y)=\frac{x-y}{p-q},$ 
	we obtain the   numbers, derivative and quantum algebra induced by the {\bf Jagannathan-Srinivasa algebra} \cite{JS}:
	\begin{eqnarray*}
		[n]_{p,q}=\frac{p^n-q^n}{p-q}, \quad {\mathcal D}_{p,q}\varPsi(z)=\frac{\varPsi(pz)-\varPsi(qz)}{z(p-q)}
	\end{eqnarray*}
	and 
	\begin{eqnarray*}
		&& 
		\quad \left[N,\; A\right] = - A, \qquad\qquad\quad \left[N,\;A^\dag\right] = A^\dag.
		\cr&& A\,A^\dag-qA^\dag\,A=p^{N} .
	\end{eqnarray*}
	\item[(iv)]Putting $	\mathcal{R}(x,y)=\frac{1-x\,y}{ (p^{-1}-q)x},$ 
	we get  the numbers, derivative, and quantum algebra from the {\bf Chakrabarty - Jagannathan algebra} \cite{Chakrabarti&Jagan}:
	\begin{eqnarray*}
		[n]_{p^{-1},q}=\frac{p^{-n}-q^n}{p^{-1}-q}, \quad
		{\mathcal D}_{p^{-1},q}\varPsi(z)=\frac{\varPsi(p^{-1}z)-\varPsi(qz)}{z(p^{-1}-q)}
	\end{eqnarray*}
	and 
	\begin{eqnarray*}
		&& 
		\quad \left[N,\; A\right] = - A, \qquad\qquad\quad \left[N,\;A^\dag\right] = A^\dag.
		\cr&& A\,A^\dag-qA^\dag\,A=p^{-N}\quad\mbox{or}\quad A\,A^\dag-q^{-1}A^\dag\,A=p^{N} .
	\end{eqnarray*}
	\item[(v)] Given $\mathcal{R}(x,y)={x\,y-1}{ (q-p^{-1})y},$ 
	we derive the numbers, derivative, and quantum algebra associated to the {\bf Hounkonnou-Ngompe  generalization of $q$-Quesne algebra} \cite{HN}:
	\begin{eqnarray*}
		[n]^Q_{p,q}=\frac{p^n-q^{-n}}{q-p^{-1}},
		\quad  
		{\mathcal D}^Q_{p,q}\varPsi(z)=\frac{\varPsi(pz)-\varPsi(q^{-1}z)}{z(q-p^{-1})},
	\end{eqnarray*}
	and \begin{eqnarray*}
		&& 
		\quad \left[N,\; A\right] = - A, \qquad\qquad\quad \left[N,\;A^\dag\right] = A^\dag.
		\cr&& p^{-1}A\,A^\dag-A^\dag\,A=q^{-N-1}\quad\mbox{or}\quad qA\,A^\dag-A^\dag\,A=p^{N+1} .
	\end{eqnarray*}
\end{itemize}

Now, we recall some notions about matrix model. We use the notation for the Schur polynomials  as polynomials of power sums $p_k=\sum_{i}z^k_i$ \cite{Mironov1}.
The Hermitean Gaussian matrix model is defined by the partition function
\begin{eqnarray*}\label{pf}
  Z_{N}(p_k):=\frac{1}{V_{N}}\int_{H_N}\,dH \exp\bigg(-\frac{1}{2}Tr\,H^2
  + \sum_{k}\frac{p_k}{k}\,Tr\,H^k\bigg),
\end{eqnarray*} 
where $H_N$ is the space of Hermitean $N\times N$ matrices, $dH$ the
Lebesgue measure and $V_{N}$ the volume of the unitary group
$U(N)$. 

The relation \eqref{pf} is a generating function of all
gauge-invariant correlators given by:
\begin{eqnarray*}
  \bigg\langle \prod_{i}Tr\,H^{k_i} \bigg\rangle:=\frac{1}{Z_{N}(0)}
  \int_{H_N}\,dH\prod_{i}Tr\,H^{k_i}\exp\big(-\frac{1}{2}Tr\,H^2\big).
\end{eqnarray*}
Integrating over {$U(N)$} in the {relation} \eqref{pf}
{gives} \cite{Mehta}
\begin{eqnarray*}\label{pf1}
Z_{N}(p_k):=\frac{1}{N!}\int_{-\infty}^{\infty}\prod_{i}\,dz_{i}\prod_{j\neq i}\big(z_{i}-z_{j}\big) \exp\bigg(-\frac{1}{2}\sum_{i}z^2_{i} + \sum_{i,k}\frac{p_k}{k}\,z_{i}^k\bigg)
\end{eqnarray*}
and 
\begin{eqnarray*}
\bigg\langle \prod_{i}\sum_{m}z_{m}^{k_i} \bigg\rangle:=\frac{1}{Z_{N}(0)}\int_{-\infty}^{\infty}\prod_{i}\,dz_{i}\prod_{j\neq i}\big(z_{i}-z_{j}\big)\bigg(\prod_{i}\sum_{m}z_{m}^{k_i}\bigg)\exp\big(-\frac{1}{2}\sum_{i}z^2_{i}\big),
\end{eqnarray*}
where $z_{i}$ are the eigenvalues of $H.$ 
\section{$\mathcal{R}(p,q)$-Heisenberg Virasoro algebra}
In this section, we construct the operators satifying the generalized
Heisenberg Witt algebra. Moreover, the central extensions are provided
and the Heisenberg Virasoro algebra is deduced in the framework of the
$\mathcal{R}(p,q)$-deformed quantum algebra. Particular cases are deduced.
\begin{Definition}
	The $\mathcal{R}(p,q)$-deformed operators $L_m$ and $I_m$ are given as follows: 
	\begin{eqnarray}\label{b}
	L_{m}\phi(z)=-z^{m}\,{\mathcal D}_{\mathcal{R}(p,q)}\phi(z),\quad \mbox{and}\quad  I_{m}\phi(z)=-(\tau\,z)^{m}\phi(z),
	\end{eqnarray}  where ${\mathcal D}_{\mathcal{R}(p,q)}$ is given by the relation \eqref{r5} and  $\tau:=\tau(p,q)$ is a parameter of deformation depending on $p$ and $q.$
\end{Definition}
\noindent Then, the $\mathcal{R}(p,q)$-Heisenberg-Witt algebra is denoted by  $\mathcal{H}_{\mathcal{R}(p,q)}:= span\{L_m, I_m/ m\in\mathbb{Z}\}.$

We introduce a family of deformations of the commutator:
$$[A,B]_{a,b}=aAB-bBA,$$ where $a$ and $b$ are referred to
as the coefficients of commutation. They can be an arbitrary complex
or real numbers. Then:
\begin{Proposition}
	The $\mathcal{R}(p,q)$-Heisenberg Witt algebra is generated by the operators (\ref{b}) obeying the following commutation relations:
	\begin{eqnarray*}
	\,\big[L_{m_1}, L_{m_2}\big]_{x,y}\phi(z)&=& [m_1-m_2]_{\mathcal{R}(p,q)}\,L_{m_1+m_2}\phi(z),\\,\big[L_{m_1}, I_{m_2}\big]_{u,v}\phi(z)&=& -[m_2]_{\mathcal{R}(p,q)}\,I_{m_1+m_2}\phi(z),\\\,\big[I_{m_1}, I_{m_2}\big]_{\mathcal{R}(p,q)}&=& 0,
	\end{eqnarray*}
	where  
	\begin{eqnarray}\label{coef}
	\left \{
	\begin{array}{l}
	x=q^{m_1-m_2}\,p^{m_1}\,\Theta_{mn}(p,q)\mbox{,}\quad y=p^{m_1}\,\Theta_{mn}(p,q),
	\\
	u=\tau^{m_1}\,p^{m_2}\mbox{,}\quad v=\tau^{m_1}\,(pq)^{m_2},\\
	\Theta_{mn}(p,q)=\frac{[m_1-m_2]_{\mathcal{R}(p,q)}}{[m_1]_{\mathcal{R}(p,q)}-(pq)^{m_1-m_2}\,[m_2]_{\mathcal{R}(p,q)}}.
	\end{array}
	\right .
	\end{eqnarray}
\end{Proposition}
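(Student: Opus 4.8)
The plan is to establish the three bracket relations as operator identities on $\mathcal{O}(\mathbb{D}_R)$, and since the operators $L_m$ and $I_m$ of \eqref{b} act termwise on power series it suffices to check each of them on a monomial $\phi(z)=z^k$. The single fact I would isolate first is that on monomials the $\mathcal{R}(p,q)$-derivative reduces to multiplication by a scalar: from \eqref{r5}, using that $P$ and $Q$ act on $z^k$ by $p^k$ and $q^k$ and that $\mathcal{D}_{p,q}z^k=[k]_{p,q}z^{k-1}$, one gets $\mathcal{D}_{\mathcal{R}(p,q)}z^k=[k]_{\mathcal{R}(p,q)}z^{k-1}$. Hence $L_m z^k$ and $I_m z^k$ are each an explicit scalar times a single monomial, and every product occurring in the three brackets is a scalar (a product of two $\mathcal{R}(p,q)$-numbers, up to a power of $\tau$) times a monomial.

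For the $L$--$L$ relation I would write out $L_{m_1}L_{m_2}z^k$ and $L_{m_2}L_{m_1}z^k$; after factoring off the common $[k]_{\mathcal{R}(p,q)}$ and the common monomial, the two coefficients are related by the exchange $m_1\leftrightarrow m_2$ in a single shifted $\mathcal{R}(p,q)$-number. The role of the \emph{deformed} commutator is then the key point: the coefficients $x,y$ of \eqref{coef} are precisely those for which the weighted difference $x\,(\,\cdot\,)_{\mathcal{R}(p,q)}-y\,(\,\cdot\,)_{\mathcal{R}(p,q)}$ of these shifted numbers collapses — using the structure of $\mathcal{R}$ from \eqref{r10} — to $[m_1-m_2]_{\mathcal{R}(p,q)}$ times the coefficient of $L_{m_1+m_2}z^k$. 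The normalising factor $\Theta_{mn}(p,q)$ is chosen exactly so that this surviving constant is $[m_1-m_2]_{\mathcal{R}(p,q)}$, and the $p$-power prefactors in $x$ and $y$ account for the scalar that $\mathcal{D}_{\mathcal{R}(p,q)}$ produces when commuted past the outer factor $z^{m_1}$. To keep that last bookkeeping transparent I would re-express $\mathcal{D}_{\mathcal{R}(p,q)}$ through the ordinary $(p,q)$-derivative, which obeys the genuine two-term Leibniz rule $\mathcal{D}_{p,q}(z^a\psi)=[a]_{p,q}z^{a-1}\psi(pz)+q^a z^a\mathcal{D}_{p,q}\psi$ together with $\mathcal{D}_{p,q}P=pP\mathcal{D}_{p,q}$ and $\mathcal{D}_{p,q}Q=qQ\mathcal{D}_{p,q}$, carry out the manipulation there, and refold.

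The $L$--$I$ relation is analogous but lighter, because $I_{m_2}$ is just multiplication by $-(\tau z)^{m_2}$: both $L_{m_1}I_{m_2}z^k$ and $I_{m_2}L_{m_1}z^k$ are $\tau^{m_2}$ times a monomial, with coefficients $[m_2+k]_{\mathcal{R}(p,q)}$ and $[k]_{\mathcal{R}(p,q)}$ respectively, and the choice $u=\tau^{m_1}p^{m_2}$, $v=\tau^{m_1}(pq)^{m_2}$ is the one for which $u[m_2+k]_{\mathcal{R}(p,q)}-v[k]_{\mathcal{R}(p,q)}$ reduces to $\tau^{m_1}[m_2]_{\mathcal{R}(p,q)}$, i.e. to $-[m_2]_{\mathcal{R}(p,q)}I_{m_1+m_2}z^k$. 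The $I$--$I$ relation is immediate: $I_{m_1}$ and $I_{m_2}$ are both multiplications by monomials in $z$, hence commute, so the deformed bracket with its (equal) coefficients vanishes identically.

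The step I expect to be the real work rather than a formality is the $\mathcal{R}(p,q)$-number algebra feeding the $L$--$L$ and $L$--$I$ computations: showing that for an admissible $\mathcal{R}$ as in \eqref{r10} the relevant weighted differences of shifted $\mathcal{R}(p,q)$-numbers genuinely telescope, so that all $k$-dependence cancels and the right-hand sides are honest multiples of $L_{m_1+m_2}$ and $I_{m_1+m_2}$, with the surviving constants packaged exactly as the normalisation $\Theta_{mn}(p,q)$ and the dilation-induced $p,q$-factors of \eqref{coef}. Once this is established, the particular cases in the itemised list of quantum algebras follow by substituting the corresponding choice of $\mathcal{R}$ and reading off the resulting commutation coefficients.
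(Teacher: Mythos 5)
Your route is genuinely different from the paper's: you propose to verify the brackets on monomials $z^k$ and to fix $x,y,u,v$ by pure $\mathcal{R}(p,q)$-number identities, whereas the paper works with a general $\phi$ through the twisted Leibniz rule ${\mathcal D}_{\mathcal{R}(p,q)}(fg)={\mathcal D}_{\mathcal{R}(p,q)}(f)\,(Pg)+(Qf)\,{\mathcal D}_{\mathcal{R}(p,q)}(g)$, splits $L_{m_1}L_{m_2}\phi$ into a piece proportional to $L_{m_1+m_2}\phi$ and a piece proportional to $L_{m_1+m_2}{\mathcal D}_{\mathcal{R}(p,q)}\phi$, and chooses $x,y$ (resp.\ $u,v$) so that the second-derivative pieces cancel and the remaining coefficients reproduce $\Theta_{mn}$. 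The problem is that the step you defer as ``the real work'' is exactly where your version breaks: with \emph{scalar} coefficients the $k$-dependence does not cancel. Test it in the Jagannathan--Srinivasa case $[n]=\frac{p^n-q^n}{p-q}$ with the values \eqref{coef}: one does get $x\,q^{m_2}-y\,q^{m_1}=0$, so the $q^k$-part of the weighted difference dies, but the $p^k$-part survives and gives $x\,[m_2+k]-y\,[m_1+k]=-\Theta_{mn}(p,q)\,p^{m_1+m_2+k}\,[m_1-m_2]$, while your target is the $k$-independent quantity $-[m_1-m_2]$; similarly $u\,[m_2+k]-v\,[k]=\tau^{m_1}p^{m_2+k}\,[m_2]$ instead of $\tau^{m_1}[m_2]$. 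The leftover $p^k$ is the dilation $\phi\mapsto\phi(pz)$ (the operator $P$, i.e.\ $p^{z\partial_z}$) produced by the Leibniz rule, and no choice of constants removes it: on your route the brackets close only up to this operator factor (equivalently, with operator-valued coefficients), not as stated. Your monomial test would also flag a degree mismatch coming from \eqref{b} ($z^{m}$ rather than $z^{m+1}$, the convention used later for ${\mathbb T}^{\mathcal{R}(p^a,q^a)}_m$).

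Two further points. First, your hope to prove the telescoping ``for an admissible $\mathcal{R}$ as in \eqref{r10}'' cannot work at that generality: a two-term weighted difference of shifted $\mathcal{R}(p,q)$-numbers collapses only when $[n]_{\mathcal{R}(p,q)}$ has the two-structure-function form $[n]=\frac{\epsilon_1^n-\epsilon_2^n}{\epsilon_1-\epsilon_2}$, an assumption the paper introduces only later (quoting \cite{HMM}); for a generic series \eqref{r10} the numbers are sums of many exponentials $p^{nu}q^{nv}$ and no pair $(x,y)$ kills the $k$-dependence. Second, be aware that the paper's own proof sidesteps rather than solves this issue: after cancelling the ${\mathcal D}_{\mathcal{R}(p,q)}\phi$-terms via $x\,q^{m_2}=y\,q^{m_1}$ (resp.\ $v=u\,q^{m_2}$), it identifies the surviving term $x\,[m_2]\,z^{m_1+m_2-1}({\mathcal D}_{\mathcal{R}(p,q)}\phi)(pz)$ with $-x\,[m_2]\,p^{-m_2}L_{m_1+m_2}\phi$, i.e.\ it absorbs the same dilation factor implicitly. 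To make your argument sound, either state and prove the relations with the operator factor made explicit (as the paper itself does in the $n$-algebra section, where factors $\epsilon_2^{z\partial_z-m-n}$ appear in the structure constants), or follow the Leibniz-rule bookkeeping and spell out precisely which identification converts the $\phi(pz)$-term into a multiple of $L_{m_1+m_2}\phi$.
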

\begin{proof}
Using the $\mathcal{R}(p,q)$-formula:
	\begin{align*}
		{\mathcal D}_{\mathcal{R}(p,q)}\big(f(z)g(z)\big)&= {\mathcal D}_{\mathcal{R}(p,q)}(f(z))(Pg(z)) + (Qf(z)){\mathcal D}_{\mathcal{R}(p,q)}(g(z))\\
		&= {\mathcal D}_{\mathcal{R}(p,q)}(f(z))(Qg(z)) + (Pf(z)){\mathcal D}_{\mathcal{R}(p,q)}(g(z)),
	\end{align*} we have:
	\begin{align*}
		xL_{m_1}L_{m_2}\phi(z)&=x\,z^{m_1}\,{\mathcal D}_{\mathcal{R}(p,q)}\big(z^{m_2}{\mathcal D}_{\mathcal{R}(p,q)}\phi(z)\big)\nonumber\\
		&=-x\,[m_2]_{\mathcal{R}(p,q)}\,p^{-m_2}\,L_{m_2+m_1}\,\phi(z)-xq^{m_2}\,L_{m_2+m_1}\,{\mathcal D}_{\mathcal{R}(p,q)}\phi(z)\big).
	\end{align*}
	By analogy, 
	\begin{align*}
		yL_{m_2}L_{m_1}\phi(z)&=-y\,[m_1]_{\mathcal{R}(p,q)}\,p^{-m_1}\,L_{m_2+m_1}\,\phi(z)-yq^{m_1}\,L_{m_2+m_1}\,{\mathcal D}_{\mathcal{R}(p,q)}\phi(z).
	\end{align*}
	After computation, we get:
	\begin{eqnarray*}
		\left \{
		\begin{array}{l}
			x=q^{m_1-m_2}\,p^{m_1}\,\Theta_{mn}(p,q),\\\\ y=p^{m_1}\,\Theta_{mn}(p,q),\\\\
			\Theta_{mn}(p,q)=\frac{[m_1-m_2]_{\mathcal{R}(p,q)}}{[m_1]_{\mathcal{R}(p,q)}-(pq)^{m_1-m_2}\,[m_2]_{\mathcal{R}(p,q)}}.
		\end{array}
		\right .
	\end{eqnarray*}
	Moreover, we use the same technique to obtain 
	$u=\tau^{m_1}\,p^{m_2}$ and $v=\tau^{m_1}\,(pq)^{m_2}.$
\end{proof}
\begin{Remark}
	There exist another way to construct the $\mathcal{R}(p,q)$-Heisenberg Witt algebra. Here, we consider $\mathcal{H}_{\mathcal{R}(p,q)}$ be a non associative algebra with basis $\{z^m\,{\mathcal D}^{s}_{\mathcal{R}(p,q)}/ m\in\mathbb{Z}, s\in\mathbb{N}\}$ and  defined the following product:
	\begin{eqnarray*}
	\big(z^{m_1}{\mathcal D}^{s_1}_{\mathcal{R}(p,q)} \big)\circ \big(z^{m_2}{\mathcal D}^{s_2}_{\mathcal{R}(p,q)}\big):=z^{m_1+m_2}\sum_{i=0}^{s_1}\binom{s_1}{i}\,[m_2]^{i}_{\mathcal{R}(p,q)}\,{\mathcal D}^{s_1+s_2-i}_{\mathcal{R}(p,q)},
	\end{eqnarray*}
	with $(m_1,m_2)\in\mathbb{Z}\times\mathbb{Z}$ and $(s_1,s_2)\in\mathbb{N}\times\mathbb{N}.$
	
	Therefore,  the operators $L_m$ and $I_m$ satisfy the commutation relations presented by:
	\begin{eqnarray}
	\,\big[L_{m_1}, L_{m_2}\big]_{\mathcal{R}(p,q)}\phi(z)&=& [m_1-m_2]_{\mathcal{R}(p,q)}\,L_{m_1+m_2}\phi(z),\label{hwa}\\\,\big[L_{m_1}, I_{m_2}\big]_{\mathcal{R}(p,q)}\phi(z)&=& -\tau^{-m_1}\,[m_2]_{\mathcal{R}(p,q)}\,I_{m_1+m_2}\phi(z),\label{hwb}\\\,\big[I_{m_1}, I_{m_2}\big]_{\mathcal{R}(p,q)}\phi(z)&=& 0\label{hwc}.
	\end{eqnarray}
\end{Remark}
\begin{Definition}
 A Hom-Lie algebra is a vector space with skew symmetric bracket and
	generalised Jacobi identity
	$[\alpha(x),[y,z]]+[\alpha(y),[z,x]]+[\alpha(z),[x,y]]=0$
	for an endomorphism $\alpha$.
	\end{Definition}
\begin{Definition}
	A $\mathcal{R}(p,q)$-deformed $2$-cocycle on $\mathcal{H}_{\mathcal{R}(p,q)}$ is a bilinear function $\Psi: \mathcal{H}_{\mathcal{R}(p,q)} \times \mathcal{H}_{\mathcal{R}(p,q)} \longrightarrow \mathbb{C}$ verifying the following conditions:
	\begin{eqnarray}
	\,\Psi(x,y)&=&-\Psi(y,x),\label{a2cocy}\\
	\,\Psi([x,y]_{\mathcal{R}(p,q)}, \alpha(z))&=&\Psi(\alpha(x),[y,z]_{\mathcal{R}(p,q)}) + \Psi([x,z]_{\mathcal{R}(p,q)}, \alpha(y))\label{b2cocy},
	\end{eqnarray}
	where $x,y,z \in \mathcal{H}_{\mathcal{R}(p,q)}$ and \begin{eqnarray*}
		\alpha(L_m)=\frac{[2\,m]_{\mathcal{R}(p,q)}}{[m]_{\mathcal{R}(p,q)}}L_m\quad \mbox{and}\quad \alpha(I_m)=\frac{[2\,m]_{\mathcal{R}(p,q)}}{[m]_{\mathcal{R}(p,q)}}I_m .
	\end{eqnarray*}
\end{Definition}

Note that the $\mathcal{R}(p,q)$-numbers \eqref{rpqnumbers} can be rewritten in the form \cite{HMM}:
\begin{eqnarray*}
	[n]_{\mathcal
		{R}(p,q)}=\frac{\epsilon^n_1-\epsilon^n_2}{\epsilon_1-\epsilon_2},\quad \epsilon_1\neq \epsilon_2,
\end{eqnarray*}
where  $\epsilon_i, i \in\{1,2\},$ are  the structure functions
depending on the deformation parameters $p$ and $q.$
\begin{Lemma}\cite{HMM}
	The $\mathcal{R}(p,q)$-Jacobi identity is given by:
	\begin{eqnarray}\label{rpqJI}
	\displaystyle
	\sum_{(i,j,l)\in\mathcal{C}(n,m,k)} (\frac{1}{\epsilon_1\epsilon_2})^{-l}\frac{[2i]_{\mathcal{R}(p,q)}}{[i]_{\mathcal{R}(p,q)}}\big[L_i, \big[L_j , L_l\big]_{\mathcal{R}(p,q)}\big]_{\mathcal{R}(p,q)} =0,
	\end{eqnarray}
	where  $n$, $m$ and $k$ are natural numbers, and  $\mathcal{C}(n,m,k)$ refers  to the
	cyclic permutation of $(n,m,k)$.
\end{Lemma}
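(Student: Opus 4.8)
The statement to prove is the $\mathcal{R}(p,q)$-Jacobi identity \eqref{rpqJI}, the assertion that
$$\sum_{(i,j,l)\in\mathcal{C}(n,m,k)} (\epsilon_1\epsilon_2)^{l}\,\frac{[2i]_{\mathcal{R}(p,q)}}{[i]_{\mathcal{R}(p,q)}}\,\big[L_i, \big[L_j , L_l\big]_{\mathcal{R}(p,q)}\big]_{\mathcal{R}(p,q)} =0,$$
where the inner and outer brackets are the ordinary (undeformed) commutators of the operators $L_m$ acting on $\mathcal{O}(\mathbb{D}_R)$, and the cyclic sum runs over the three cyclic rotations of the triple $(n,m,k)$.

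\medskip
\noindent\textbf{Step 1: a closed form for the double bracket.}
The plan is to start from the basic relation $[L_{m_1},L_{m_2}]_{\mathcal{R}(p,q)} = [m_1-m_2]_{\mathcal{R}(p,q)}\,L_{m_1+m_2}$ (relation \eqref{hwa}), which we may assume. Iterating it once gives
$$\big[L_i,[L_j,L_l]\big] = [j-l]_{\mathcal{R}(p,q)}\,[i-j-l]_{\mathcal{R}(p,q)}\,L_{i+j+l}.$$
Since $i+j+l = n+m+k$ is invariant under the cyclic permutation, every term in the sum is a scalar multiple of the same operator $L_{n+m+k}$. So the identity reduces to a purely numerical statement: the cyclic sum of the coefficients
$$(\epsilon_1\epsilon_2)^{l}\,\frac{[2i]_{\mathcal{R}(p,q)}}{[i]_{\mathcal{R}(p,q)}}\,[j-l]_{\mathcal{R}(p,q)}\,[i-j-l]_{\mathcal{R}(p,q)}$$
must vanish. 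At this point I would abbreviate $[n]_{\mathcal{R}(p,q)} = \frac{\epsilon_1^n-\epsilon_2^n}{\epsilon_1-\epsilon_2}$ and also note $\frac{[2i]_{\mathcal{R}(p,q)}}{[i]_{\mathcal{R}(p,q)}} = \epsilon_1^i + \epsilon_2^i$.

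\medskip
\noindent\textbf{Step 2: expand everything into monomials in $\epsilon_1,\epsilon_2$ and collect.}
Substituting the explicit formulas, each coefficient becomes (up to the common factor $(\epsilon_1-\epsilon_2)^{-2}$) a product
$$(\epsilon_1\epsilon_2)^{l}\,(\epsilon_1^i+\epsilon_2^i)\,(\epsilon_1^{j-l}-\epsilon_2^{j-l})\,(\epsilon_1^{i-j-l}-\epsilon_2^{i-j-l}),$$
which expands into a sum of at most eight monomials of the form $\pm\,\epsilon_1^{a}\epsilon_2^{b}$ with $a+b = i+j+l = n+m+k$ (so every monomial lives on the same "diagonal"). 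The strategy is then to write out the three cyclic contributions $(i,j,l)=(n,m,k),(m,k,n),(k,n,m)$ explicitly, and check that the monomials cancel in pairs. I expect the cancellation to be fully symmetric: a monomial produced by one cyclic term with a $+$ sign is matched by the same monomial with a $-$ sign produced by another cyclic term, once one uses the symmetry $\epsilon_1\leftrightarrow\epsilon_2$ together with the cyclic relabelling. Organizing the bookkeeping cleanly — e.g. grouping the eight monomials of each term by which of $\epsilon_1,\epsilon_2$ gets the "large" exponent — is the part that needs care.

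\medskip
\noindent\textbf{Main obstacle.}
The only real difficulty is the combinatorial bookkeeping of Step 2: there is no conceptual subtlety once Step 1 collapses the identity to a scalar relation, but there are twenty-four signed monomials to pair up, and it is easy to make sign errors or to mismatch exponents. A cleaner route, which I would actually prefer, is to observe that the undeformed bracket $[L_{m_1},L_{m_2}]_{\mathcal{R}(p,q)}$ together with the twisting map $\alpha(L_m)=\frac{[2m]_{\mathcal{R}(p,q)}}{[m]_{\mathcal{R}(p,q)}}L_m$ makes $\mathcal{H}_{\mathcal{R}(p,q)}$ a Hom-Lie algebra in the sense of the Definition above; then \eqref{rpqJI} is literally the generalised Jacobi identity $[\alpha(x),[y,z]]+[\alpha(y),[z,x]]+[\alpha(z),[x,y]]=0$ written out on the basis elements $x=L_n,y=L_m,z=L_k$, provided one matches the scalar $(\epsilon_1\epsilon_2)^{l}$ against the structure of $\alpha$. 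Verifying the Hom-Jacobi identity for this particular bracket and twist is exactly the computation of Step 2 again, so in the write-up I would simply carry out that computation directly, present the list of monomials, and exhibit the pairing that makes the cyclic sum telescope to zero.
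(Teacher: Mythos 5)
There is nothing in the paper to compare against: the Lemma is imported from \cite{HMM} with no proof given here. Judged on its own terms, your proposal has a genuine gap, and it sits exactly at the step you postpone. Your Step 1 is fine: granting \eqref{hwa} and bilinearity, the nested bracket collapses to $[j-l]_{\mathcal{R}(p,q)}[i-j-l]_{\mathcal{R}(p,q)}L_{i+j+l}$, so \eqref{rpqJI} reduces to the vanishing of the cyclic sum of the scalars $(\epsilon_1\epsilon_2)^{l}\,(\epsilon_1^{i}+\epsilon_2^{i})\,[j-l]_{\mathcal{R}(p,q)}[i-j-l]_{\mathcal{R}(p,q)}$. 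But your structural claim in Step 2 --- that every monomial produced lies on the same diagonal $a+b=n+m+k$ --- is false: the weight $(\epsilon_1\epsilon_2)^{l}(\epsilon_1^{i}+\epsilon_2^{i})$ is homogeneous of degree $2l+i$ and the two brackets contribute degree $i-2l-2$, so the $(i,j,l)$-term is homogeneous of degree $2i-2$, and the three cyclic contributions have the distinct degrees $2n-2$, $2m-2$, $2k-2$. Hence no monomial-by-monomial pairing across the cyclic terms can exist for generic structure functions, and in fact the scalar identity you reduce to fails numerically whenever $\epsilon_1\epsilon_2\neq 1$: for the Jagannathan--Srinivasa choice $\epsilon_1=p$, $\epsilon_2=q$ with $p=\tfrac12$, $q=\tfrac13$ and $(n,m,k)=(1,2,3)$, the three terms are approximately $9.028$, $-1.505$ and $0$, whose sum is not zero (the third term vanishes because $[3-1-2]_{\mathcal{R}(p,q)}=[0]=0$). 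So the plan ``present the list of monomials and exhibit the pairing'' cannot be completed for the identity in the form you set it up.

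The cancellation you anticipate does happen in the symmetric case $\epsilon_1\epsilon_2=1$ (e.g.\ $[x]_q=\frac{q^x-q^{-x}}{q-q^{-1}}$), where the prefactor is trivial and all monomials become Laurent monomials in $\epsilon_1$ of one fixed total weight; and it also happens if the compensating power of $\epsilon_1\epsilon_2$ is chosen so that all three cyclic terms become homogeneous of the same degree --- for instance with $(\epsilon_1\epsilon_2)^{l-i}$ in place of $(\epsilon_1\epsilon_2)^{l}$ every term has degree $-2$ and one checks, e.g.\ with $\epsilon_1=2$, $\epsilon_2=3$, $(n,m,k)=(1,2,4)$, that the three terms $\tfrac{5275}{1296}$, $-\tfrac{4693}{1296}$, $-\tfrac{582}{1296}$ sum to zero. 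So before any monomial bookkeeping you must confront the prefactor $(\tfrac{1}{\epsilon_1\epsilon_2})^{-l}$ itself (is the exponent really $-l$, or should it involve $i$ as well?), since under your own reduction the statement as printed is not an identity for general $\mathcal{R}(p,q)$. The same objection hits your preferred Hom-Lie shortcut: the twist $\alpha$ only accounts for the factor $\frac{[2i]_{\mathcal{R}(p,q)}}{[i]_{\mathcal{R}(p,q)}}=\epsilon_1^{i}+\epsilon_2^{i}$, and the Hom-Jacobi sum without the correct balancing power of $\epsilon_1\epsilon_2$ again fails once $\epsilon_1\epsilon_2\neq1$, so declaring \eqref{rpqJI} to be ``literally'' the Hom-Jacobi identity does not discharge the computation --- it is the computation, and it is exactly where the difficulty lies.
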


Let us now present the Heisenberg Virasoro algebra from the $\mathcal{R}(p,q)$- quantum algebra. It's an extension of the  $\mathcal{R}(p,q)$-Heisenberg Witt algebra given by  (\ref{hwa}), (\ref{hwb}), and (\ref{hwc}). The central extension of the relation (\ref{hwa}) is well known in our previous work as follows \cite{HMM}: 
\begin{eqnarray*}\label{Rct}
	C_{\mathcal{R}(p,q)}(n)=
	C(p,q)\big(\frac{q}{p}\big)^{-n}\frac{[n]_{\mathcal{R}(p,q)}} {6[2\,n]_{\mathcal{R}(p,q)}}\,[n-1]_{\mathcal{R}(p,q)}\,[n]_{\mathcal{R}(p,q)}\,[n+1]_{\mathcal{R}(p,q)},
\end{eqnarray*}
where
$C(p,q)$ is an arbitrary function of $(p,q).$

From the relations (\ref{a2cocy}), (\ref{b2cocy}), and (\ref{rpqJI}), we can obtain:
\begin{eqnarray}\label{cli}
	C_{LI}(m_1)=
	C_{LI}(p,q)\big(\frac{q}{p}\big)^{-m_1}\,\frac{2[m_1]_{\mathcal{R}(p,q)}} {[2\,m_1]_{\mathcal{R}(p,q)}}\,[m_1]_{\mathcal{R}(p,q)}\,[m_1+1]_{\mathcal{R}(p,q)},
\end{eqnarray}
and 
\begin{eqnarray}\label{ci}
	C_{I}(m_1)=
	C_{I}(p,q)\big(\frac{q}{p}\big)^{-m_1}\,\frac{2[m_1]_{\mathcal{R}(p,q)}} {[2\,m_1]_{\mathcal{R}(p,q)}}\,[m_1]_{\mathcal{R}(p,q)}.
\end{eqnarray}

Then, the $\mathcal{R}(p,q)$-deformed Heisenberg-Virasoro algebra $\bar{\mathcal{H}}_{\mathcal{R}(p,q)}:=span\{\bar{L}_m,\bar{I}_m/ m\in\mathbb{Z}\}.$ 
\begin{Proposition}
  The $\mathcal{R}(p,q)$-deformed Heisenberg Virasoro algebra is governed by the following commutation relations:
		\begin{eqnarray*}
		\,\big[\bar{L}_{m_1}, \bar{L}_{m_2}\big]_{x,y}\phi(z)&=& [m_1-m_2]_{\mathcal{R}(p,q)}\,\bar{L}_{m_1+m_2}\phi(z) + C_{L}(m_1)\,\delta_{m_1+m_2,0}\label{hva},\\\,\big[\bar{L}_{m_1}, \bar{I}_{m_2}\big]_{u,v}\phi(z)&=& -[m_2]_{\mathcal{R}(p,q)}\,\bar{I}_{m_1+m_2}\phi(z)+ C_{LI}(m_1)	\delta_{m_1+m_2,0}\label{hvb},\\\,\big[\bar{I}_{m_1}, \bar{I}_{m_2}\big]_{\mathcal{R}(p,q)}\phi(z)&=& C_{I}(p,q)\big(\frac{q}{p}\big)^{m_1}\,\frac{2[m_1]_{\mathcal{R}(p,q)}} {[2\,m_1]_{\mathcal{R}(p,q)}}\,[m_1]_{\mathcal{R}(p,q)}\delta_{m_1+m_2,0}\label{hvc},
		\end{eqnarray*}
		where 
		\begin{align*}
			\,C_{L}(m_1)&=C_{L}(p,q)\big(\frac{q}{p}\big)^{-m_1}\frac{[m_1]_{\mathcal{R}(p,q)}} {6[2\,m_1]_{\mathcal{R}(p,q)}}\,[m_1-1]_{\mathcal{R}(p,q)}\,[m_1]_{\mathcal{R}(p,q)}\,[m_1+1]_{\mathcal{R}(p,q)},\\
			\,C_{LI}(m_1)&=C_{LI}(p,q)\big(\frac{q}{p}\big)^{-m_1}\,\frac{2[m_1]_{\mathcal{R}(p,q)}} {[2\,m_1]_{\mathcal{R}(p,q)}}\,[m_1]_{\mathcal{R}(p,q)}\,[m_1+1]_{\mathcal{R}(p,q)},
		\end{align*}
	and $x,$ $y,$ $u,$ and $v$ are given by the relation \eqref{coef}.
\end{Proposition}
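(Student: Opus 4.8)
The plan is to build $\bar{\mathcal{H}}_{\mathcal{R}(p,q)}$ as the central extension of the $\mathcal{R}(p,q)$-Heisenberg--Witt algebra $\mathcal{H}_{\mathcal{R}(p,q)}$ described in the Remark, so that the first two brackets acquire the cocycle terms $C_L(m_1)\delta_{m_1+m_2,0}$ and $C_{LI}(m_1)\delta_{m_1+m_2,0}$ and the $\bar I$--$\bar I$ bracket acquires the term displayed in the statement. Concretely, I would set $\bar L_m = L_m + $ (a central element contribution) and $\bar I_m = I_m + $ (a central element contribution), and verify that the deformed brackets $[\,\cdot\,,\,\cdot\,]_{x,y}$, $[\,\cdot\,,\,\cdot\,]_{u,v}$ with the coefficients from \eqref{coef} reproduce exactly the three relations claimed. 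The noncentral parts of the first two brackets are already established in the Proposition (with the very same $x,y,u,v$), so the only new content is the central part, and that is governed by the $2$-cocycle conditions \eqref{a2cocy}, \eqref{b2cocy} together with the $\mathcal{R}(p,q)$-Jacobi identity \eqref{rpqJI}.

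The key steps, in order. First I would recall that the central extension of \eqref{hwa} is already known from \cite{HMM}, namely $C_{\mathcal{R}(p,q)}(n)=C(p,q)(q/p)^{-n}\frac{[n]_{\mathcal{R}(p,q)}}{6[2n]_{\mathcal{R}(p,q)}}[n-1]_{\mathcal{R}(p,q)}[n]_{\mathcal{R}(p,q)}[n+1]_{\mathcal{R}(p,q)}$, which is exactly $C_L(m_1)$; so the first relation is immediate and needs only the remark that the deformed bracket $[\bar L_{m_1},\bar L_{m_2}]_{x,y}$ inherits the noncentral term from the Proposition while the central term is unaffected by the rescaling by $x,y$ because it is a pure scalar only surviving when $m_1+m_2=0$ (where one checks $x$ and $y$ collapse consistently). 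Second, I would plug the ansatz for $C_{LI}(m_1)$ and $C_I(m_1)$ into the cocycle identity \eqref{b2cocy} with $\alpha(L_m)=\frac{[2m]_{\mathcal{R}(p,q)}}{[m]_{\mathcal{R}(p,q)}}L_m$ and the analogous formula for $I_m$: taking $x=L_i$, $y=L_j$, $z=I_l$ with $i+j+l=0$ yields a recursion that determines $C_{LI}$ up to the overall constant $C_{LI}(p,q)$, giving precisely \eqref{cli}; taking $x=L_i$, $y=I_j$, $z=I_l$ yields the corresponding recursion fixing $C_I$ as in \eqref{ci}. Third, I would verify skew-symmetry \eqref{a2cocy} and the generalized Jacobi identity \eqref{rpqJI} are satisfied by these choices — this is the consistency check that no further central terms are needed and that the ones written are forced. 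Finally I would deduce the third bracket: since $[I_{m_1},I_{m_2}]_{\mathcal{R}(p,q)}=0$ from \eqref{hwc}, the extended bracket $[\bar I_{m_1},\bar I_{m_2}]_{\mathcal{R}(p,q)}$ is purely central and equal to $C_I(-m_1)$ when $m_1+m_2=0$, which after using $(q/p)^{-m_1}=(q/p)^{m_1}$ is not quite right — rather one uses $C_I(m_1)$ evaluated with the sign flip coming from skew-symmetry, producing the stated $C_I(p,q)(q/p)^{m_1}\frac{2[m_1]_{\mathcal{R}(p,q)}}{[2m_1]_{\mathcal{R}(p,q)}}[m_1]_{\mathcal{R}(p,q)}\delta_{m_1+m_2,0}$.

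The main obstacle I anticipate is the bookkeeping in the cocycle recursion: one must carefully expand \eqref{b2cocy} using the explicit deformed brackets with coefficients \eqref{coef}, keep track of the $\alpha$-rescalings $\frac{[2m]_{\mathcal{R}(p,q)}}{[m]_{\mathcal{R}(p,q)}}$ on each slot, and the factors $(1/\epsilon_1\epsilon_2)^{-l}$ in \eqref{rpqJI}, then solve the resulting linear functional equation for $C_{LI}$ and $C_I$. The algebra is of the same flavour as in \cite{HMM} for the pure Virasoro case, so I expect it to go through, but confirming that the telescoping of the $\mathcal{R}(p,q)$-numbers $[m_1-1]_{\mathcal{R}(p,q)}[m_1]_{\mathcal{R}(p,q)}[m_1+1]_{\mathcal{R}(p,q)}$ (for $C_L$) degenerates correctly to $[m_1]_{\mathcal{R}(p,q)}[m_1+1]_{\mathcal{R}(p,q)}$ (for $C_{LI}$) and to $[m_1]_{\mathcal{R}(p,q)}$ (for $C_I$) under the mixed cocycle conditions is the delicate point. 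A secondary issue is checking that the deformed-bracket coefficients $x,y,u,v$ do not obstruct the closure of the central extension — i.e.\ that the cocycle terms are compatible with the fact that $[\,\cdot\,,\,\cdot\,]_{x,y}$ and $[\,\cdot\,,\,\cdot\,]_{u,v}$ are not ordinary commutators; here one relies on evaluating the coefficients at $m_1+m_2=0$, where $\Theta_{mn}(p,q)$ and the powers of $p,q,\tau$ simplify.
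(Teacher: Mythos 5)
Your proposal follows essentially the same route as the paper: the paper likewise takes $C_L$ directly from the known central extension of \cite{HMM} and obtains $C_{LI}$ and $C_I$ from the $2$-cocycle conditions \eqref{a2cocy}, \eqref{b2cocy} together with the Jacobi identity \eqref{rpqJI}, attaching them to the noncentral brackets already established, and it supplies no more computational detail than your outline does. The only place you go beyond the paper is your (admittedly hesitant) discussion of the exponent $(q/p)^{m_1}$ versus $(q/p)^{-m_1}$ in the $\bar I$--$\bar I$ relation, a discrepancy between \eqref{ci} and the Proposition that the paper itself leaves unexplained.
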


\begin{Remark} It is necessary to derive particular cases of Heisenberg Virasoro algebra induced by the deformed quantum algebra known in the literature.
	\begin{enumerate}
		\item [(i)]The $q$-operators $\bar{L}_m=-z^m\,{\mathcal D}_q$ and $\bar{I}_m=-q^m\,z^m$ satisfy the $q$-Heisenberg Virasoro algebra with the commutation relations:
			\begin{align*}
				\big[\bar{L}_{m_1}, \bar{L}_{m_2}\big]_{x,y}\phi(z)&= [m_1-m_2]_{q}\bar{L}_{m_1+m_2}\phi(z)\\ &+ \frac{C_{L}(q)q^{-m_1}} {12(1+q^{m_1})}[m_1-1]_{q}[m_1]_{q}[m_1+1]_{q}\delta_{m_1+m_2,0}\\\big[\bar{L}_{m_1}, \bar{I}_{m_2}\big]_{u,v}\phi(z)&= -[m_2]_{q}\,\bar{I}_{m_1+m_2}\phi(z)\\&+ \frac{2C_{LI}(q)\,q^{-m_1}} {1+q^{m_1}}[m_1]_{q}[m_1+1]_{q}	\delta_{m_1+m_2,0}\\\big[\bar{I}_{m_1}, \bar{I}_{m_2}\big]_{q}\phi(z)&= C_{I}(q)\,q^{m_1}\,\frac{2[m_1]_{q}} {[2\,m_1]_{q}}[m_1]_{q}\delta_{m_1+m_2,0},
			\end{align*}
			where  
			\begin{eqnarray*}
				\left \{
				\begin{array}{l}
					x=q^{m_1-m_2}\,\Theta_{mn}(q)\mbox{,}\quad y=\Theta_{mn}(q),
					\\
					u=q^{m_1}\mbox{,}\quad v=q^{m_1-m_2},\\
					\Theta_{mn}(q)=\frac{[m_1-m_2]_{q}}{[m_1]_{q}-q^{m_1-m_2}\,[m_2]_{q}}.
				\end{array}
				\right .
			\end{eqnarray*}
		\item[(ii)] The $q$-Heisenberg Virasoro algebra generated by the $q$-operators $\bar{L}_m=-z^m\,{\mathcal D}_q$ and $\bar{I}_m=-q^{2m}\,z^m$ obeys the following commutation relations:
		\begin{align*}
			\big[\bar{L}_{m_1}, \bar{L}_{m_2}\big]_{x,y}\phi(z)&= [m_1-m_2]_{q}\bar{L}_{m_1+m_2}\phi(z)\\ &+ \frac{C_{L}(q)q^{-2\,m_1}} {12(q^{m_1}+q^{-m_1})}[m_1-1]_{q}[m_1]_{q}[m_1+1]_{q}\delta_{m_1+m_2,0}\\\big[\bar{L}_{m_1}, \bar{I}_{m_2}\big]_{u,v}\phi(z)&= -[m_2]_{q}\bar{I}_{m_1+m_2}\phi(z)\\&+ \frac{2C_{LI}(q)\,q^{-2\,m_1}} {q^{m_1}+q^{-m_1}}[m_1]_{q}[m_1+1]_{q}	\delta_{m_1+m_2,0}\\\big[\bar{I}_{m_1}, \bar{I}_{m_2}\big]_{q}\phi(z)&= C_{I}(q)\,q^{2\,m_1}\,\frac{2[m_1]_{q}} {[2\,m_1]_{q}}[m_1]_{q}\,\delta_{m_1+m_2,0},
		\end{align*}
		where  
		\begin{eqnarray*}
			\left \{
			\begin{array}{l}
				x=q^{2m_1-m_2}\,\Theta_{mn}(q)\mbox{,}\quad y=q^{m_1}\,\Theta_{mn}(q),
				\\
				u=q^{2m_1+m_2}\mbox{,}\quad v=q^{2m_1},\\
				\Theta_{mn}(q)=\frac{[m_1-m_2]_{q}}{[m_1]_{q}-[m_2]_{q}}.
			\end{array}
			\right .
		\end{eqnarray*}
		\item[(iii)]The  Heisenberg- Virasoro algebra induced by the {\bf Chakrabarty - Jaganathan algebra} is generated by the $(p,q)$- operators $\bar{L}_m=-z^m\,{\mathcal D}_{p,q}$ and $\bar{I}_m=-\big(\frac{q}{p}\big)^{m}\,z^m$ verifying the commutation relations:
			\begin{align*}
				\big[\bar{L}_{m_1}, \bar{L}_{m_2}\big]_{x,y}\phi(z)&= [m_1-m_2]_{p,q}\bar{L}_{m_1+m_2}\phi(z)\\ &+ \frac{C_{L}(p,q)q^{-2\,m_1}} {12(p^{m_1}+q^{m_1})}[m_1-1]_{p,q}[m_1]_{p,q}[m_1+1]_{p,q}\delta_{m_1+m_2,0}\\\big[\bar{L}_{m_1}, \bar{I}_{m_2}\big]_{u,v}\phi(z)&= -q^{m_1}\,[m_2]_{p,q}\bar{I}_{m_1+m_2}\phi(z)\\&+ \frac{2C_{LI}(p,q)\,q^{-2\,m_1}} {p^{m_1}+q^{m_1}}[m_1]_{p,q}[m_1+1]_{p,q}	\delta_{m_1+m_2,0}\\\big[\bar{I}_{m_1}, \bar{I}_{m_2}\big]_{p,q}\phi(z)&= C_{I}(q)\,q^{2\,m_1}\,\frac{2[m_1]_{p,q}} {[2\,m_1]_{p,q}}[m_1]_{p,q}\,\delta_{m_1+m_2,0},
			\end{align*}
			where  
			\begin{eqnarray*}
				\left \{
				\begin{array}{l}
					x=q^{m_1-m_2}\,p^{m_1}\,\Theta_{m_1m_2}(p,q)\mbox{,}\quad y=p^{m_1}\,\Theta_{m_1m_2}(p,q),
					\\
					u=q^{m_1}\,p^{m_2-m_1}\mbox{,}\quad v=q^{m_1+m_2}\,p^{m_2-m_1},\\
					\Theta_{mn}(p,q)=\frac{[m_1-m_2]_{p,q}}{[m_1]_{p,q}-(pq)^{m_1-m_2}\,[m_2]_{p,q}}.
				\end{array}
				\right .
			\end{eqnarray*}
		\item[(iv)]The  Heisenberg Virasoro algebra associated to the {\bf generalized $q$-Quesne algebra} is governed by the  operators $\bar{L}_m=-z^m\,{\mathcal D}^Q_{p,q}$ and $\bar{I}_m=-\big(\frac{q}{p}\big)^{m}\,z^m$ obeying the commutation relations:
			\begin{align*}
				\big[\bar{L}_{m_1}, \bar{L}_{m_2}\big]_{x,y}\phi(z)&= [m_1-m_2]^Q_{p,q}\bar{L}_{m_1+m_2}\phi(z)\\ &+ \frac{C_{L}(p,q)q^{-2\,m_1}} {12(q^{m_1}+q^{-m_1})}[m_1-1]_{q}[m_1]^Q_{p,q}[m_1+1]^Q_{p,q}\delta_{m_1+m_2,0}\\\big[\bar{L}_{m_1}, \bar{I}_{m_2}\big]_{u,v}\phi(z)&= -q^{m_1}\,[m_2]^Q_{p,q}\bar{I}_{m_1+m_2}\phi(z)\\&+ \frac{2C_{LI}(q)\,q^{-2\,m_1}} {q^{m_1}+q^{-m_1}}[m_1]^Q_{p,q}[m_1+1]^Q_{p,q}	\delta_{m_1+m_2,0}\\\big[\bar{I}_{m_1}, \bar{I}_{m_2}\big]^Q_{p,q}\phi(z)&= C_{I}(q)\,q^{2\,m_1}\,\frac{2[m_1]^Q_{p,q}} {[2\,m_1]^Q_{p,q}}[m_1]^Q_{p,q}\,\delta_{m_1+m_2,0},
			\end{align*}
			where  
			\begin{eqnarray*}
				\left \{
				\begin{array}{l}
					x=q^{-m_1+m_2}\,p^{m_1}\,\Theta^Q_{m_1m_2}(p,q)\mbox{,}\quad y=p^{m_1}\,\Theta^Q_{m_1m_2}(p,q),
					\\
					u=q^{-m_1}\,p^{m_2-m_1}\mbox{,}\quad v=q^{-m_1+m_2}\,p^{m_2+m_1},\\
					\Theta^Q_{m_1m_2}(p,q)=\frac{[m_1-m_2]^Q_{p,q}}{[m_1]^Q_{p,q}-(pq)^{m_1-m_2}\,[m_2]^Q_{p,q}}.
				\end{array}
				\right .
			\end{eqnarray*}
	\end{enumerate}
\end{Remark}
\section{Applications}
This section is reserved to some applications of the generalized Heisenberg Virasoro algebra. Presicely, we study the generalized Heisenberg Witt $n$-algebras, a toy model for the Heisenberg Virasoro algebra, the $\mathcal{R}(p,q)$-deformed matrix models, and the elliptic generalized matrix models.
\subsection{$\mathcal{R}(p,q)$-Heisenberg Witt $n$-algebras}
We  construct the Heisenberg Witt $n$-algebras from the $\mathcal{R}(p,q)$-deformed quantum algebras \cite{HB1}. Particular cases are deduced. We consider the following relation for the $\mathcal{R}(p,q)$-deformed derivative:
\begin{eqnarray}\label{rpqder}
{\mathcal D}_{{\mathcal R}(p,q)}:=\frac{1}{z}\,[z\partial_z]_{\mathcal{R}(p,q)}
\end{eqnarray}
and  the operators given by:
\begin{equation*}
{\mathbb T}^{{\mathcal R}(p^{a},q^{a})}_m\phi(z):=-z^{m+1}\,{\mathcal D}_{{\mathcal R}(p^{a},q^{a})}\phi(z),
\end{equation*}
where ${\mathcal D}_{{\mathcal R}(p^{a},q^{a})}$ is the ${\mathcal R}(p,q)$-deformed derivative defined as:
\begin{equation*}
	{\mathcal D}_{{\mathcal R}(p^{a},q^{a})}\big(\phi(z)\big):=\frac{p^{a}-q^{a}}{p^{a\,P}-q^{a\,Q}}{\mathcal R}(p^{a\,P},q^{a\,Q})\,\frac{\phi(p^{a}z)-\phi(q^{a}z)}{p^{a}-q^{a}}.
\end{equation*}
Then, from the relation (\ref{rpqder}),
 the $\mathcal{R}(p,q)$-deformed operators can be rewritten as follows:
\begin{eqnarray}\label{to1}
{\mathbb T}^{{\mathcal R}(p^{a},q^{a})}_m \phi(z)=-[z\partial_z-m]_{\mathcal{R}(p^{a},q^{a})}\,z^{m}\phi(z).
\end{eqnarray}
Moreover, we define the second operators as follows:
\begin{eqnarray}\label{to2}
\mathbb{I}^{{\mathcal R}(p^{a},q^{a})}_m\phi(z):=- \tau^{a}\,z^m\,\phi(z).
\end{eqnarray}
\begin{Proposition}
	The deformed operators (\ref{to1}) and \eqref{to2} satisfy the product relations:
	 \begin{align*}\label{pre}
	{\mathbb T}^{{\mathcal R}(p^{a},q^{a})}_n.{\mathbb T}^{{\mathcal R}(p^{b},q^{b})}_m&=
-{\big(\epsilon^{a+b}_1-\epsilon^{a+b}_2\big)\epsilon^{m\,a}_1\over \big(\epsilon^{a}_1-\epsilon^{a}_2\big)\big(\epsilon^{b}_1-\epsilon^{b}_2\big)}\,{\mathbb T}^{{\mathcal R}(p^{a+b},q^{a+b})}_{m+n} + {\epsilon^{(z\partial_z-n)a}_2\over \epsilon^{b}_1-\epsilon^{b}_2}\, {\mathbb T}^{{\mathcal R}(p^{a},q^{a})}_{m+n}\\ &+ {\epsilon^{m\,a}_1\,\epsilon^{(z\partial_z-m-n)b}_2\over \epsilon^{a}_1-\epsilon^{a}_2}\,{\mathbb T}^{{\mathcal R}(p^{b},q^{b})}_{m+n}
	\end{align*}
and 
\begin{align*}
\mathbb{T}^{{\mathcal R}(p^{a},q^{a})}_n.\mathbb{I}^{{\mathcal R}(p^{b},q^{b})}_m&= \frac{1}{\epsilon^{a}_1-\epsilon^{a}_2}\bigg\{ \tau^{-a}\epsilon^{a(z\partial_z-n)}_2\,\mathbb{I}^{{\mathcal R}(p^{a+b},q^{a+b})}_{n+m}-\big(\epsilon^{a(z\partial_z-n)}_1-1\big)\mathbb{I}^{{\mathcal R}(p^{b},q^{b})}_{n+m}\\ &- \tau^{b-a}\,\mathbb{I}^{{\mathcal R}(p^{a},q^{a})}_{n+m}\bigg\}.
\end{align*}
Furthermore,  the following commutation relations hold:
		\begin{eqnarray}\label{crto}
		\Big[{\mathbb T}^{{\mathcal R}(p^{a},q^{a})}_n, {\mathbb T}^{{\mathcal R}(p^{b},q^{b})}_m\Big]&=&{\big(\epsilon^{a+b}_1-\epsilon^{a+b}_2\big)\big(\epsilon^{n\,b}_1-\epsilon^{m\,a}_1\big)\over \big(\epsilon^{a}_1-\epsilon^{a}_2\big)\big(\epsilon^{b}_1-\epsilon^{b}_2\big)}{\mathbb T}^{{\mathcal R}(p^{a+b},q^{a+b})}_{m+n}\nonumber\\ &-&{\epsilon^{(z\partial_z-m-n)a}_2\big(\epsilon^{n\,b}_1-\epsilon^{m\,a}_2\big)\over \epsilon^{b}_1-\epsilon^{b}_2}{\mathbb T}^{{\mathcal R}(p^{a},q^{a})}_{m+n}\nonumber\\&+& {\epsilon^{(z\partial_z-m-n)b}_2\big(\epsilon^{m\,a}_1-\epsilon^{n\,b}_2\big)\over \epsilon^{a}_1-\epsilon^{a}_2}{\mathbb T}^{{\mathcal R}(p^{b},q^{b})}_{m+n}
		\end{eqnarray}
and 
	\begin{eqnarray}\label{crto2}
	\Big[{\mathbb T}^{{\mathcal R}(p^{a},q^{a})}_n, {\mathbb I}^{{\mathcal R}(p^{b},q^{b})}_m\Big]&=&\frac{1}{\epsilon^{a}_1-\epsilon^{a}_2}\bigg\{\tau^{-a}\epsilon^{a(z\partial_z-n)}_2\big(1-\epsilon^{-m\,a}_2\big)\mathbb{I}^{{\mathcal R}(p^{a+b},q^{a+b})}_{n+m}\nonumber\\&-&\epsilon^{a(z\partial_z-n)}_1\big(\epsilon^{-m\,a}_1 - 1\big)\mathbb{I}^{{\mathcal R}(p^{b},q^{b})}_{n+m} \bigg\}
	.\end{eqnarray}
\end{Proposition}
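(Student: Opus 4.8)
The plan is to prove all four identities — the two product relations and the two commutation relations \eqref{crto}, \eqref{crto2} — by a direct operator computation based on the explicit realizations $\mathbb{T}^{\mathcal{R}(p^{a},q^{a})}_m=-[z\partial_z-m]_{\mathcal{R}(p^{a},q^{a})}\,z^m$ of \eqref{to1} and $\mathbb{I}^{\mathcal{R}(p^{a},q^{a})}_m=-\tau^{a}z^m$ of \eqref{to2}, together with the structure-function form $[x]_{\mathcal{R}(p^{a},q^{a})}=\frac{\epsilon^{a x}_1-\epsilon^{a x}_2}{\epsilon^{a}_1-\epsilon^{a}_2}$ (the substitution $p\mapsto p^{a},\,q\mapsto q^{a}$ replaces $\epsilon_i$ by $\epsilon_i^{a}$ in $[n]_{\mathcal{R}(p,q)}=\frac{\epsilon_1^n-\epsilon_2^n}{\epsilon_1-\epsilon_2}$), read as an operator function of $z\partial_z$ after an integer shift. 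The only combinatorial input is the intertwining rule for the Euler operator: for any formal power series $f$ one has $z^{k}f(z\partial_z)=f(z\partial_z-k)\,z^{k}$, equivalently $f(z\partial_z)z^{k}=z^{k}f(z\partial_z+k)$, since $z\partial_z$ is diagonal on monomials. In particular $z^{k}\epsilon_i^{a z\partial_z}=\epsilon_i^{a(z\partial_z-k)}z^{k}$ and $z^{k}[z\partial_z-m]_{\mathcal{R}(p^{b},q^{b})}=[z\partial_z-m-k]_{\mathcal{R}(p^{b},q^{b})}z^{k}$.

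First I would treat the product $\mathbb{T}^{\mathcal{R}(p^{a},q^{a})}_n\cdot\mathbb{T}^{\mathcal{R}(p^{b},q^{b})}_m$. Substituting \eqref{to1} and using the intertwining rule to push every power of $z$ to the far right, the composite collapses to the single operator $[z\partial_z-n]_{\mathcal{R}(p^{a},q^{a})}\,[z\partial_z-n-m]_{\mathcal{R}(p^{b},q^{b})}\,z^{m+n}$. Writing both bracket factors in structure-function form and multiplying out the numerator gives four monomials in $\epsilon_1^{a z\partial_z},\epsilon_2^{a z\partial_z},\epsilon_1^{b z\partial_z},\epsilon_2^{b z\partial_z}$. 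The two ``pure'' monomials carrying $\epsilon_1^{a}\epsilon_1^{b}$ and $\epsilon_2^{a}\epsilon_2^{b}$ recombine — after adding and subtracting one mixed bridge monomial — into a multiple of $[z\partial_z-n-m]_{\mathcal{R}(p^{a+b},q^{a+b})}\,z^{m+n}=-\mathbb{T}^{\mathcal{R}(p^{a+b},q^{a+b})}_{m+n}$, and the remaining mixed monomials factor into $\epsilon_2$-prefactored copies of $[z\partial_z-n-m]_{\mathcal{R}(p^{a},q^{a})}\,z^{m+n}$ and $[z\partial_z-n-m]_{\mathcal{R}(p^{b},q^{b})}\,z^{m+n}$, i.e.\ of $\mathbb{T}^{\mathcal{R}(p^{a},q^{a})}_{m+n}$ and $\mathbb{T}^{\mathcal{R}(p^{b},q^{b})}_{m+n}$; applying the intertwining rule once more rewrites the $\epsilon_2^{\,\cdot\, z\partial_z}$ prefactors as the displayed $\epsilon_2^{(z\partial_z-n)a}$ and $\epsilon_2^{(z\partial_z-m-n)b}$ operators. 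The product $\mathbb{T}^{\mathcal{R}(p^{a},q^{a})}_n\cdot\mathbb{I}^{\mathcal{R}(p^{b},q^{b})}_m$ is lighter, since $\mathbb{I}^{\mathcal{R}(p^{b},q^{b})}_m$ is multiplication by $-\tau^{b}z^{m}$: the composite equals $\tau^{b}[z\partial_z-n]_{\mathcal{R}(p^{a},q^{a})}\,z^{m+n}$; splitting $\epsilon_1^{a(z\partial_z-n)}-\epsilon_2^{a(z\partial_z-n)}$, then writing $\epsilon_1^{a(z\partial_z-n)}=(\epsilon_1^{a(z\partial_z-n)}-1)+1$ and recognizing $z^{m+n}=-\tau^{-c}\mathbb{I}^{\mathcal{R}(p^{c},q^{c})}_{n+m}$ for $c\in\{a,b,a+b\}$, reproduces exactly the stated expression with its $\tau^{-a}$, $1$ and $\tau^{b-a}$ coefficients.

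Finally, the two commutation relations \eqref{crto} and \eqref{crto2} follow by antisymmetrizing the corresponding product relations, i.e.\ subtracting the image under the interchange $(n,a)\leftrightarrow(m,b)$. In the $\mathbb{T}^{\mathcal{R}(p^{a+b},q^{a+b})}_{m+n}$-channel the scalar prefactors combine into the factor $\epsilon^{n b}_1-\epsilon^{m a}_1$; in the $\mathbb{T}^{\mathcal{R}(p^{a},q^{a})}_{m+n}$- and $\mathbb{T}^{\mathcal{R}(p^{b},q^{b})}_{m+n}$-channels the two $\epsilon_2$-prefactors merge into a common $\epsilon_2^{(z\partial_z-m-n)a}$, resp.\ $\epsilon_2^{(z\partial_z-m-n)b}$, times the brackets $\epsilon^{n b}_1-\epsilon^{m a}_2$, resp.\ $\epsilon^{m a}_1-\epsilon^{n b}_2$; for \eqref{crto2} the contribution proportional to $\mathbb{I}^{\mathcal{R}(p^{a},q^{a})}_{n+m}$ is symmetric under the interchange and cancels, leaving precisely the two displayed terms.

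I expect the main obstacle to be the regrouping step inside the $\mathbb{T}\cdot\mathbb{T}$ product: collapsing the four-monomial expansion of $[z\partial_z-n]_{\mathcal{R}(p^{a},q^{a})}[z\partial_z-n-m]_{\mathcal{R}(p^{b},q^{b})}$ into exactly three deformed-number operators attached to the exponents $a+b$, $a$ and $b$ requires choosing the right bridge monomial to add and subtract, and one must keep careful track of whether the shift $z\partial_z-n$ or $z\partial_z-m-n$ sits in each $\epsilon$-prefactor, since the non-commutativity of multiplication by $z^{m+n}$ with $\epsilon_i^{\,\cdot\, z\partial_z}$ produced by the intertwining rule is precisely what distinguishes them. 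The antisymmetrization step then inherits the same bookkeeping, but uses no new ideas beyond \eqref{to1}, \eqref{to2}, the intertwining rule and the structure-function representation.
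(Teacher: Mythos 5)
Your computational strategy -- realizing the operators through \eqref{to1}--\eqref{to2}, writing $[x]_{\mathcal{R}(p^{a},q^{a})}=(\epsilon_1^{ax}-\epsilon_2^{ax})/(\epsilon_1^{a}-\epsilon_2^{a})$ and pushing powers of $z$ through with $z^{k}f(z\partial_z)=f(z\partial_z-k)z^{k}$ -- is exactly the ``simple computation'' the paper leaves unwritten, and it does prove the second product relation. The gap is your claim that regrouping $[z\partial_z-n]_{\mathcal{R}(p^{a},q^{a})}[z\partial_z-m-n]_{\mathcal{R}(p^{b},q^{b})}\,z^{m+n}$ ``reproduces exactly the stated expression''. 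Carrying the bridge trick out faithfully, the cross terms come out attached to the \emph{opposite} channels: one obtains $+\frac{\epsilon_1^{ma}\,\epsilon_2^{(z\partial_z-m-n)b}}{\epsilon_1^{b}-\epsilon_2^{b}}\,\mathbb{T}^{\mathcal{R}(p^{a},q^{a})}_{m+n}+\frac{\epsilon_2^{(z\partial_z-n)a}}{\epsilon_1^{a}-\epsilon_2^{a}}\,\mathbb{T}^{\mathcal{R}(p^{b},q^{b})}_{m+n}$, which agrees with the printed right-hand side only when $a=b$. For $a\neq b$ the printed identity is false: acting on $\phi(z)=z$ with $m=0$, $a=1$, $b=2$ and clearing denominators, it would require $(\epsilon_1-\epsilon_2)(\epsilon_1^{2}-\epsilon_2^{2})=(\epsilon_1^{3}-\epsilon_2^{3})-\epsilon_2(\epsilon_1-\epsilon_2)-\epsilon_2^{2}(\epsilon_1^{2}-\epsilon_2^{2})$, which fails (e.g.\ $5\neq 29$ at $\epsilon_1=2$, $\epsilon_2=3$), whereas the swapped attachment holds identically. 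The same transposition propagates to \eqref{crto}: the factor $\epsilon_2^{(z\partial_z-m-n)a}(\epsilon_1^{nb}-\epsilon_2^{ma})$ must sit over $\epsilon_1^{a}-\epsilon_2^{a}$ in front of $\mathbb{T}^{\mathcal{R}(p^{b},q^{b})}_{m+n}$, and the other cross term in front of $\mathbb{T}^{\mathcal{R}(p^{a},q^{a})}_{m+n}$. So either the regrouping was not actually performed, or it was bent to land on the misprinted formulas; a correct write-up must derive the corrected identities (or restrict to $a=b$, the only case used later in the paper).

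For \eqref{crto2} there are two further problems. First, $[\mathbb{T}^{\mathcal{R}(p^{a},q^{a})}_n,\mathbb{I}^{\mathcal{R}(p^{b},q^{b})}_m]$ is \emph{not} the antisymmetrization of the $\mathbb{T}\cdot\mathbb{I}$ product under $(n,a)\leftrightarrow(m,b)$; that interchange yields $\mathbb{T}^{\mathcal{R}(p^{a},q^{a})}_n\mathbb{I}^{\mathcal{R}(p^{b},q^{b})}_m-\mathbb{T}^{\mathcal{R}(p^{b},q^{b})}_m\mathbb{I}^{\mathcal{R}(p^{a},q^{a})}_n$, not the commutator. You must compute the reversed product separately, namely $\mathbb{I}^{\mathcal{R}(p^{b},q^{b})}_m\mathbb{T}^{\mathcal{R}(p^{a},q^{a})}_n=\tau^{b}[z\partial_z-m-n]_{\mathcal{R}(p^{a},q^{a})}z^{m+n}$; the $\mathbb{I}^{\mathcal{R}(p^{a},q^{a})}_{n+m}$ term then cancels because $\tau^{b-a}\mathbb{I}^{\mathcal{R}(p^{a},q^{a})}_{n+m}=\mathbb{I}^{\mathcal{R}(p^{b},q^{b})}_{n+m}$ as multiplication operators, not by any interchange symmetry. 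Second, doing this correctly gives $[\mathbb{T}^{\mathcal{R}(p^{a},q^{a})}_n,\mathbb{I}^{\mathcal{R}(p^{b},q^{b})}_m]=\tau^{b}\big([z\partial_z-n]_{\mathcal{R}(p^{a},q^{a})}-[z\partial_z-m-n]_{\mathcal{R}(p^{a},q^{a})}\big)z^{m+n}$, whose $\epsilon_1$-part is $+\epsilon_1^{a(z\partial_z-n)}\big(1-\epsilon_1^{-ma}\big)$, i.e.\ the factor $(\epsilon_1^{-ma}-1)$ in the printed \eqref{crto2} carries the wrong sign (a quick check with $a=b=1$, $n=0$, $m=1$ on $\phi=z$ gives $\tau\big([2]_{\mathcal{R}(p,q)}-[1]_{\mathcal{R}(p,q)}\big)z^{2}$ directly, but twice-too-large a value from the printed right-hand side). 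In short: your method is the right one and is what the paper's ``by simple computation'' intends, but as written your argument asserts agreement with formulas that the computation does not (and cannot, for $a\neq b$) produce, and the antisymmetrization shortcut for \eqref{crto2} is invalid as stated.
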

\begin{proof}
	By simple computation. 
\end{proof}

Putting $a=b=1,$ we obtain, respectively, 
	\begin{align*}
		\Big[{\mathbb T}^{{\mathcal R}(p,q)}_n, {\mathbb T}^{{\mathcal R}(p,q)}_m\Big]&={\big(\epsilon^{n}_1-\epsilon^{m}_1\big)\over \big(\epsilon_1-\epsilon_2\big)}[2]_{{\mathcal R}(p,q)}{\mathbb T}^{{\mathcal R}(p^{2},q^{2})}_{m+n}\\&-\epsilon^{z\partial_z-m-n}_2\Big([n]_{{\mathcal R}(p,q)}+[m]_{{\mathcal R}(p,q)}\Big) {\mathbb T}^{{\mathcal R}(p,q)}_{m+n}
	\end{align*}
and 
	\begin{align*}
	\Big[{\mathbb T}^{{\mathcal R}(p,q)}_n, {\mathbb I}^{{\mathcal R}(p,q)}_m\Big]&=\frac{1}{\epsilon_1-\epsilon_2}\bigg\{\tau^{-1}\epsilon^{(z\partial_z-n)}_2\big(1-\epsilon^{-m}_2\big)\mathbb{I}^{{\mathcal R}(p^2,q^2)}_{n+m}\\&-\epsilon^{(z\partial_z-n)}_1\big(\epsilon^{-m}_1 - 1\big)\mathbb{I}^{{\mathcal R}(p,q)}_{n+m} \bigg\}
	.\end{align*}

We consider the $n$-brackets defined by:
\begin{eqnarray}\label{nbracket}
	\Big[{\mathbb T}^{{\mathcal R}(p^{a_1},q^{a_1})}_{m_1},\cdots,{\mathbb T}^{{\mathcal R}(p^{a_n},q^{a_n})}_{m_n}
	\Big]:=\Gamma^{i_1 \cdots i_n}_{1 \cdots n}\,{\mathbb T}^{{\mathcal R}(p^{a_{i_1}},q^{a_{i_1}})}_{m_{i_1}} \cdots {\mathbb T}^{{\mathcal R}(p^{a_{i_n}},q^{a_{i_n}})}_{m_{i_n}},
\end{eqnarray}
and 
	\begin{eqnarray}\label{rnb2}
	\bigg[{\mathbb T}^{{\mathcal R}(p^{a},q^{a})}_{m_1},\cdots, {\mathbb I}^{{\mathcal R}(p^{a},q^{a})}_{m_n}\bigg]&:=&
	\sum_{j=0}^{n-1}(-1)^{n-1+j}\Gamma^{i_1\ldots i_{n-1}}_{12\cdots n-1}{\mathbb T}^{{\mathcal R}(p^{a},q^{a})}_{m_{i_1}}\ldots {\mathbb T}^{{\mathcal R}(p^{a},q^{a})}_{m_{i_j}}\nonumber\\&\times& 
	{\mathbb I}^{{\mathcal R}(p^{a},q^{a})}_{m_{n}}{\mathbb T}^{{\mathcal R}(p^{a},q^{a})}_{m_{i_{j+1}}}\cdots {\mathbb T}^{{\mathcal R}(p^{a},q^{a})}_{m_{i_{n-1}}},
	\end{eqnarray}
where $\Gamma^{i_1 \cdots i_n}_{1 \cdots n}$ is the L\'evi-Civit\'a symbol given by:
\begin{eqnarray*}
	\Gamma^{j_1 \cdots j_p}_{i_1 \cdots i_p}= det\left( \begin{array} {ccc}
		\delta^{j_1}_{i_1} &\cdots&  \delta^{j_1}_{i_p}   \\ 
		\vdots && \vdots \\
		\delta^{j_p}_{i_1} & \cdots& \delta^{j_p}_{i_p}
		\end {array} \right) .
	\end{eqnarray*}
	We are interested on the case with the same ${\mathcal R}(p^{a},q^{a}).$ Then, 
	\begin{eqnarray*}
		\Big[{\mathbb T}^{{\mathcal R}(p^{a},q^{a})}_{m_1},\cdots,{\mathbb T}^{{\mathcal R}(p^{a},q^{a})}_{m_n}
		\Big]=\Gamma^{1\cdots n}_{1\cdots n}\,{\mathbb T}^{{\mathcal R}(p^{a},q^{a})}_{m_{1}}\cdots {\mathbb T}^{{\mathcal R}(p^{a},q^{a})}_{m_{n}}.
	\end{eqnarray*}
	Putting $a=b$ in the relations \eqref{crto} and \eqref{crto2}, we obtain:
		\begin{align*}
		\Big[{\mathbb T}^{{\mathcal R}(p^{a},q^{a})}_n, {\mathbb T}^{{\mathcal R}(p^{a},q^{a})}_m\Big]&=\frac{\big(\epsilon^{n\,a}_1-\epsilon^{m\,a}_1\big)}{ \epsilon^{a}_1-\epsilon^{a}_2}\,[2]_{\mathcal{R}(p^{a},q^{a})}{\mathbb T}^{{\mathcal R}(p^{2\,a},q^{2\,a})}_{m+n}\\ 
		&+ \frac{\epsilon^{(z\partial_z-m-n)a}_2}{\epsilon^{a}_1-\epsilon^{a}_2}\big(\epsilon^{m\,a}_1-\epsilon^{n\,a}_1+\epsilon^{m\,a}_2-\epsilon^{n\,a}_2\big) {\mathbb T}^{{\mathcal R}(p^{a},q^{a})}_{m+n}
		\end{align*}
	and 
		\begin{align*}
		\Big[{\mathbb T}^{{\mathcal R}(p^{a},q^{a})}_n, {\mathbb I}^{{\mathcal R}(p^{a},q^{a})}_m\Big]&=\frac{1}{\epsilon^{a}_1-\epsilon^{a}_2}\bigg\{\tau^{-a}\epsilon^{a(z\partial_z-n)}_2\big(1-\epsilon^{-m\,a}_2\big)\mathbb{I}^{{\mathcal R}(p^{2\,a},q^{2\,a})}_{n+m}\\&-\epsilon^{a(z\partial_z-n)}_1\big(\epsilon^{-m\,a}_1 - 1\big)\mathbb{I}^{{\mathcal R}(p^{a},q^{a})}_{n+m} \bigg\}.
		\end{align*}
	After computation, the $n$-brackets \eqref{nbracket} and \eqref{rnb2} can  be reduced in the form as  follows:
		\begin{align*}
		\Big[{\mathbb T}^{{\mathcal R}(p^{a},q^{a})}_{m_1},\cdots, {\mathbb T}^{{\mathcal R}(p^{a},q^{a})}_{m_n}\Big]&=\frac{1}{ \big(\epsilon^{a}_1-\epsilon^{a}_2\big)^{n-1}}\Big( M^n_{a}[n]_{{\mathcal R}(p^{a},q^{a})}{\mathbb T}^{{\mathcal R}(p^{n\,a},q^{n\,a})}_{m_1+\cdots+m_n}\\ &- {[n-1]_{{\mathcal R}(p^{a},q^{a})}\over  \epsilon^{-a\big(\sum_{l=1}^{n}z\partial_z-m_l\big)}_2}\big(M^n_{a}+ W^n_{a}\big){\mathbb T}^{{\mathcal R}(p^{(n-1)a},q^{(n-1)a})}_{m_1+\cdots+m_n}\Big),
		\end{align*}
	and 
	\begin{align}
	\bigg[{\mathbb T}^{{\mathcal R}(p^{a},q^{a})}_{m_1},{\mathbb T}^{{\mathcal R}(p^{a},q^{a})}_{m_2},\cdots, {\mathbb I}^{{\mathcal R}(p^{a},q^{a})}_{m_n}\bigg]&= \frac{1}{\big(\epsilon^{\alpha}_1-\epsilon^{a}_2\big)^{n-1}}\bigg\{F^{a}_n\,\mathbb{I}^{{\mathcal R}(p^{n\,a},q^{n\,a})}_{m_1+\cdots+m_n}\nonumber\\&-R^{a}_n\,\mathbb{I}^{{\mathcal R}(p^{(n-1)a},q^{(n-1)a})}_{m_1+\cdots+m_n} \bigg\},
	\end{align}
	where 
		\begin{align*}
			M^n_{\alpha}&=
			\epsilon^{a(n-1)\sum_{s=1}^{n}m_s}_1\,\Big(\big(\epsilon^{a}_1-\epsilon^{a}_2\big)^{n\choose 2}\prod_{1\leq j < k \leq n}\Big([-m_j]_{{\mathcal R}(p^{a},q^{a})}-[-m_k]_{{\mathcal R}(p^{a},q^{a})}\Big)\\&+\prod_{1\leq j < k \leq n}\Big(\epsilon^{-a\,m_j}_1-\epsilon^{-a\,m_k}_1\Big)\Big),
		\end{align*}
		\begin{align*}
			W^{n}_{a}
			&=\epsilon^{a(n-1)\sum_{s=1}^{n}m_s}_2\Big(\big(\epsilon^{a}_1-\epsilon^{a}_2\big)^{n\choose 2}\prod_{1\leq j < k \leq n}\Big([-m_j]_{{\mathcal R}(p^{a},q^{a})}-[-m_k]_{{\mathcal R}(p^{a},q^{a})}\Big)\\&+(-1)^{n-1}\prod_{1\leq j < k \leq n}\Big(\epsilon^{-a\,m_j}_1-\epsilon^{-a\,m_k}_1\Big)\Big),
		\end{align*}
	\begin{eqnarray*}
	F^{a}_n = \tau^{-a}\epsilon^{a\sum_{s=1}^{n-1}(z\partial_z-m_s)}_2\big(1-\epsilon^{-a\sum_{s=1}^{n-1}m_s}_2\big)
	\end{eqnarray*}
and 
\begin{eqnarray*}
R^{a}_n=\epsilon^{a\sum_{s=1}^{n-1}(z\partial_z-m_s)}_1\big(\epsilon^{-a\sum_{s=1}^{n-1}m_s}_1 - 1\big).
\end{eqnarray*}
\begin{Remark}
	Taking $n=3,$ we obtain the ${\mathcal R}(p,q)$-Heisenberg Witt $3$-algebra:
	\begin{align*}
		\Big[{\mathbb T}^{{\mathcal R}(p^{a},q^{a})}_{m_1},{\mathbb T}^{{\mathcal R}(p^{a},q^{a})}_{m_2}, {\mathbb T}^{{\mathcal R}(p^{a},q^{a})}_{m_3}\Big]&={1\over \big(\epsilon^{a}_1-\epsilon^{a}_2\big)^{2}}\Big( M^3_{a}[3]_{{\mathcal R}(p^{a},q^{a})}{\mathbb T}^{{\mathcal R}(p^{3\,a},q^{3\,a})}_{m_1+m_2+m_3}\\ &- {[2]_{{\mathcal R}(p^{a},q^{a})}\over  \epsilon^{a\big(\sum_{l=1}^{3}z\partial_z-m_l\big)}_2}\big(M^3_{a}+ W^3_{a}\big){\mathbb T}^{{\mathcal R}(p^{2\,a},q^{2\,a})}_{m_1+m_2+m_3}\Big),
	\end{align*}
	where 
		\begin{align*}
			M^3_{a}&=
			\epsilon^{2\,a\sum_{s=1}^{3}m_s}_1\,\Big(\big(\epsilon^{a}_1-\epsilon^{a}_2\big)^{3\choose 2}\prod_{1\leq j < k \leq 3}\Big([-m_j]_{{\mathcal R}(p^{a},q^{a})}-[-m_k]_{{\mathcal R}(p^{a},q^{a})}\Big)\\&+\prod_{1\leq j < k \leq 3}\Big(\epsilon^{-a\,m_j}_2-\epsilon^{-a\,m_k}_2\Big)\Big)
		\end{align*}
		and 
		\begin{align*}
			W^{3}_{a}
			&=\epsilon^{2\,a\sum_{s=1}^{3}m_s}_2\Big(\big(\epsilon^{a}_1-\epsilon^{a}_2\big)^{n\choose 2}\prod_{1\leq j < k \leq 3}\Big([-m_j]_{{\mathcal R}(p^{a},q^{a})}-[-m_k]_{{\mathcal R}(p^{a},q^{a})}\Big)\\&+\prod_{1\leq j < k \leq 3}\Big(\epsilon^{-a\,m_j}_1-\epsilon^{-a\,m_k}_1\Big)\Big).
		\end{align*}
	Moeover, 
	\begin{eqnarray*}
		\bigg[{\mathbb T}^{{\mathcal R}(p^{a},q^{a})}_{m_1},{\mathbb T}^{{\mathcal R}(p^{a},q^{a})}_{m_2}, {\mathbb I}^{{\mathcal R}(p^{a},q^{a})}_{m_3}\bigg]= \frac{1}{\big(\epsilon^{a}_1-\epsilon^{a}_2\big)^{2}}\bigg\{F^{a}_3\,\mathbb{I}^{{\mathcal R}(p^{3\,a},q^{3\,a})}_{m_1+m_2+m_3}-R^{a}_3\,\mathbb{I}^{{\mathcal R}(p^{2\,a},q^{2\,a})}_{m_1+m_2+m_3} \bigg\},
	\end{eqnarray*}
	where
	\begin{eqnarray*}
		F^{a}_3 = \tau^{-a}\epsilon^{a\sum_{s=1}^{2}(z\partial_z-m_s)}_2\big(1-\epsilon^{-a\sum_{s=1}^{2}m_s}_2\big)
	\end{eqnarray*}
	and 
	\begin{eqnarray*}
		R^{a}_3=\epsilon^{a\sum_{s=1}^{2}(z\partial_z-m_s)}_1\big(\epsilon^{-a\sum_{s=1}^{2}m_s}_1 - 1\big)
	\end{eqnarray*}
\end{Remark}
\begin{Remark}
	Interesting cases of Heisenberg Witt $n$-algebras from quantum algebras existing in the literature are deduced as follows:
	\begin{enumerate}
		\item[(a)] Taking $\mathcal{R}(x)=\frac{1-x}{1-q},$ we obtain the $q$-deformed Heisenberg Witt $n$-algebras:
			\begin{align*}
				\Big[{\mathbb T}^{q^{a}}_{m_1},\cdots, {\mathbb T}^{q^{a}}_{m_n}\Big]&=\frac{1}{ \big(1-q^{a}\big)^{n-1}}\Big( M^n_{a}[n]_{q^{a}}{\mathbb T}^{q^{n\,a}}_{m_1+\cdots+m_n}\\&- {[n-1]_{q^{a}}\over  q^{-a\big(\sum_{l=1}^{n}z\partial_z-m_l\big)}}\big(M^n_{a}+ W^n_{a}\big){\mathbb T}^{q^{(n-1)a}}_{m_1+\cdots+m_n}\Big),
			\end{align*}
		where 
			\begin{eqnarray*}
				M^n_{a}=
				\Big(\big(1-q^{a}\big)^{n\choose 2}\prod_{1\leq j < k \leq n}\Big([-m_j]_{q^{a}}-[-m_k]_{q^{a}}\Big)+\prod_{1\leq j < k \leq n}\Big(q^{-a\,m_j}-q^{-a\,m_k}\Big)\Big)
			\end{eqnarray*}
			and 
			\begin{eqnarray*}
				W^{n}_{a}
				=q^{a(n-1)\sum_{s=1}^{n}m_s}\Big(\big(1-q^{a}\big)^{n\choose 2}\prod_{1\leq j < k \leq n}\Big([-m_j]_{q^{a}}-[-m_k]_{q^{a}}\Big)\Big).
			\end{eqnarray*}
		Moreover,
		\begin{eqnarray*}
			\bigg[{\mathbb T}^{q^{a}}_{m_1},{\mathbb T}^{q^{a}}_{m_2},\cdots, {\mathbb I}^{q^{a}}_{m_n}\bigg]= \frac{1}{\big(1-q^{a}\big)^{n-1}}\,F^{a}_n\,\mathbb{I}^{q^{n\,a}}_{m_1+\cdots+m_n},
		\end{eqnarray*}
	with 
	\begin{eqnarray*}
		F^{a}_n = q^{-a}\,q^{a\sum_{s=1}^{n-1}(z\partial_z-m_s)}\big(1-q^{-a\sum_{s=1}^{n-1}m_s}\big).
	\end{eqnarray*}
		For $n=3,$ we deduce the $q$-Heisenberg Witt $3$-algebra:
		\begin{align*}
			\Big[{\mathbb T}^{q^{a}}_{m_1},{\mathbb T}^{q^{a}}_{m_2}, {\mathbb T}^{q^{a}}_{m_3}\Big]&={1\over \big(1-q^{a}\big)^{2}}\Big( M^3_{a}[3]_{q^{a}}{\mathbb T}^{q^{a}}_{m_1+m_2+m_3}\\&- {[2]_{q^{a}}\over  q^{a\big(\sum_{l=1}^{3}z\partial_z-m_l\big)}}\big(M^3_{a}+ W^3_{a}\big){\mathbb T}^{q^{a}}_{m_1+m_2+m_3}\Big),
		\end{align*}
	\begin{eqnarray*}
		\bigg[{\mathbb T}^{q^{a}}_{m_1},{\mathbb T}^{q^{a}}_{m_2}, {\mathbb I}^{q^{a}}_{m_3}\bigg]= \frac{1}{\big(1-q^{a}\big)^{2}}\,F^{a}_3\,\mathbb{I}^{q^{3\,a}}_{m_1+m_2+m_3},
	\end{eqnarray*}
		where 
			\begin{eqnarray*}
				M^3_{a}=
				\Big(\big(1-q^{a}\big)^{3\choose 2}\prod_{1\leq j < k \leq 3}\Big([-m_j]_{q^{a}}-[-m_k]_{q^{a}}\Big)+\prod_{1\leq j < k \leq 3}\big(q^{-a\,m_j}-q^{-a\,m_k}\big)\Big),
			\end{eqnarray*}
			\begin{eqnarray*}
				W^{3}_{a}
				=q^{2\,a\sum_{s=1}^{3}m_s}\Big(\big(1-q^{a}\big)^{n\choose 2}\prod_{1\leq j < k \leq 3}\Big([-m_j]_{q^{a}}-[-m_k]_{q^{a}}\Big),
			\end{eqnarray*}
		and 
		\begin{eqnarray*}
			F^{a}_3 = q^{-a}\,q^{a\sum_{s=1}^{2}(z\partial_z-m_s)}\big(1-q^{-a\sum_{s=1}^{2}m_s}\big).
		\end{eqnarray*}
	\item[(b)] Putting $\mathcal{R}(x,y)=\frac{x-y}{p-q},$ we obtain the $(p,q)$-deformed Heisenberg Witt $n$-algebras:
		\begin{align*}
		\Big[{\mathbb T}^{p^{a},q^{a}}_{m_1},\cdots, {\mathbb T}^{p^{a},q^{a}}_{m_n}\Big]&=\frac{1}{ \big(p^{a}-q^{a}\big)^{n-1}}\Big( M^n_{a}[n]_{p^{a},q^{a}}{\mathbb T}^{p^{n\,a},q^{n\,a}}_{m_1+\cdots+m_n}\\ &- {[n-1]_{p^{a},q^{a}}\over  q^{-a\big(\sum_{l=1}^{n}z\partial_z-m_l\big)}}\big(M^n_{a}+ W^n_{a}\big){\mathbb T}^{p^{(n-1)a},q^{(n-1)\alpha}}_{m_1+\cdots+m_n}\Big),
		\end{align*}
	where 
		\begin{align*}
			M^n_{a}&=
			p^{a(n-1)\sum_{s=1}^{n}m_s}\,\Big(\big(p^{a}-q^{a}\big)^{n\choose 2}\prod_{1\leq j < k \leq n}\Big([-m_j]_{p^{a},q^{a}}-[-m_k]_{p^{a},q^{a}}\Big)\\&+\prod_{1\leq j < k \leq n}\Big(q^{-a\,m_j}-q^{-a\,m_k}\Big)\Big)
		\end{align*}
		and 
		\begin{align*}
			W^{n}_{\alpha}
			&=q^{a(n-1)\sum_{s=1}^{n}m_s}\Big(\big(p^{a}-q^{a}\big)^{n\choose 2}\prod_{1\leq j < k \leq n}\Big([-m_j]_{p^{a},q^{a}}-[-m_k]_{p^{a},q^{a}}\Big)\\&+(-1)^{n-1}\prod_{1\leq j < k \leq n}\Big(p^{-a\,m_j}-p^{-a\,m_k}\Big)\Big).
		\end{align*}
	Moreover, 
	\begin{eqnarray*}
		\bigg[{\mathbb T}^{p^{a},q^{a}}_{m_1},{\mathbb T}^{p^{a},q^{a}}_{m_2},\cdots, {\mathbb I}^{p^{a},q^{a}}_{m_n}\bigg]= \frac{1}{\big(p^{a}-q^{a}\big)^{n-1}}\bigg\{F^{a}_n\,\mathbb{I}^{p^{n\,a},q^{n\,a}}_{m_1+\cdots+m_n}-R^{a}_n\,\mathbb{I}^{p^{(n-1)a},q^{(n-1)a}}_{m_1+\cdots+m_n} \bigg\},
	\end{eqnarray*}
with
\begin{eqnarray*}
	F^{a}_n = (pq)^{-a}\,q^{a\sum_{s=1}^{n-1}(z\partial_z-m_s)}\big(1-q^{-a\sum_{s=1}^{n-1}m_s}\big)
\end{eqnarray*}
and 
\begin{eqnarray*}
	R^{a}_n=p^{a\sum_{s=1}^{n-1}(z\partial_z-m_s)}\big(p^{-a\sum_{s=1}^{n-1}m_s} - 1\big).
\end{eqnarray*}
Taking $n=3,$ we obtain the $(p,q)$-Heisenberg Witt $3$-algebra:
\begin{align}
	\Big[{\mathbb T}^{p^{a},q^{a}}_{m_1},{\mathbb T}^{p^{a},q^{a}}_{m_2}, {\mathbb T}^{p^{a},q^{a}}_{m_3}\Big]&={1\over \big(p^{a}-q^{a}\big)^{2}}\Big( M^3_{a}[3]_{p^{a},q^{a}}{\mathbb T}^{p^{3\,a},q^{3\,a}}_{m_1+m_2+m_3}\\ &- {[2]_{p^{a},q^{a}}\over  q^{a\big(\sum_{l=1}^{3}z\partial_z-m_l\big)}}\big(M^3_{a}+ W^3_{a}\big){\mathbb T}^{p^{2\,a},q^{2\,a}}_{m_1+m_2+m_3}\Big),
\end{align}
\begin{eqnarray*}
	\bigg[{\mathbb T}^{p^{a},q^{a}}_{m_1},{\mathbb T}^{p^{a},q^{a}}_{m_2}, {\mathbb I}^{p^{a},q^{a}}_{m_3}\bigg]= \frac{1}{\big(p^{a}-q^{a}\big)^{2}}\bigg\{F^{a}_n\,\mathbb{I}^{3\,a}_{m_1+m_2+m_3}-R^{a}_3\,\mathbb{I}^{2\,a}_{m_1+m_2+m_3} \bigg\},
\end{eqnarray*}
where 
	\begin{align*}
		M^3_{a}&=
		p^{2\,a\sum_{s=1}^{3}m_s}\Big(\big(p^{a}-q^{a}\big)^{3\choose 2}\prod_{1\leq j < k \leq 3}\Big([-m_j]_{p^{\alpha},q^{a}}-[-m_k]_{p^{a},q^{a}}\Big)\\&+\prod_{1\leq j < k \leq 3}\Big(q^{-a\,m_j}-q^{-a\,m_k}\Big)\Big),
	\end{align*} 
	\begin{align*}
		W^{3}_{a}
		&=q^{2\,a\sum_{s=1}^{3}m_s}\Big(\big(p^{a}-q^{a}\big)^{n\choose 2}\prod_{1\leq j < k \leq 3}\Big([-m_j]_{p^{\alpha},q^{a}}-[-m_k]_{p^{a},q^{a}}\Big)\\&+\prod_{1\leq j < k \leq 3}\Big(p^{-a\,m_j}-p^{-a\,m_k}\Big)\Big),
	\end{align*}
\begin{eqnarray*}
	F^{a}_3 = (pq)^{-a}\,q^{a\sum_{s=1}^{2}(z\partial_z-m_s)}\big(1-q^{-a\sum_{s=1}^{2}m_s}\big)
\end{eqnarray*}
and 
\begin{eqnarray*}
	R^{a}_3=p^{a\sum_{s=1}^{2}(z\partial_z-m_s)}\big(p^{-a\sum_{s=1}^{2}m_s} - 1\big).
\end{eqnarray*}
	\end{enumerate}
\end{Remark}
\subsection{Heisenberg Virasoro constraints and a toy model}
Here, we use the generalized Heisenberg  Virasoro constraints to study a toy model. Particular cases are derived. They  play an important role in the study of matrix models. We consider the generating function with infinitely many parameters
given by \cite{NZ}: $$Z^{toy}(t)=\int \, \,x^{\gamma}\,\exp\left(\displaystyle\sum_{s=0}^{\infty}{t_s\over s!}x^s\right)\,dx,$$
which encodes many different integrals.
We consider the following expansion:
\begin{eqnarray}\label{Bell}
	\exp\left(\displaystyle\sum_{s=0}^{\infty}{t_s\over s!}x^s\right)=\sum_{n=0}^{\infty}B_n(t_1,\cdots,t_n){x^n\over n!},
\end{eqnarray}
where $B_n$ {are the complete} Bell polynomials.

The following property holds for the ${\mathcal R}(p,q)$-deformed derivative
\begin{small}
	\begin{eqnarray*}
		\int_{{\mathbb R}}{\mathcal D}_{{\mathcal R}(p^{a},q^{a})}f(x)d\,x={K(p^{a},q^{a})\over \epsilon^{a}_1-\epsilon^{a}_2}\Big(\int_{-\infty}^{+\infty}{f(\epsilon^{a}_1\,x)\over x}dx -\int_{-\infty}^{+\infty}{f(\epsilon^{a}_2\,x)\over x}dx\Big)=0,
	\end{eqnarray*} 
\end{small}
where 
$$K(p^{a},q^{a})={p^{a}-q^{a}\over p^{P^{a}}-q^{Q^{a}}}{\mathcal R}\big(p^{P^{a}},q^{Q^{a}}\big).$$
For $f(x)=x^{m+\gamma+1}\,\exp\left(\displaystyle\sum_{s=0}^{\infty}{t_s\over s!}x^s\right),$ we have:
\begin{eqnarray*}
	\int_{-\infty}^{+\infty}{\mathcal D}_{{\mathcal R}(p^{a},q^{a})}\left(x^{m+\gamma+1}\,\exp\left(\sum_{s=0}^{\infty}{t_s\over s!}x^s\right)\right)d\,x=0.
\end{eqnarray*}
Thus, 
	\begin{align*}
		{\mathcal D}_{{\mathcal R}(p^{a},q^{a})}\bigg(x^{m+\gamma+1}\,&\exp\left(\displaystyle\sum_{s=0}^{\infty}{t_s\over s!}x^s\right)\bigg)=\frac{[z\partial_z]_{{\mathcal R}(p^{a},q^{a})}\,x^{m+\gamma}}{ \epsilon^{a\,m}_1}\exp\left(\displaystyle\sum_{s=0}^{\infty}{t_s\over s!}x^s\right)\\&+ {K(p^{a},q^{a})\epsilon^{\alpha(m+1+\gamma)}_2\over (\epsilon^{a}_1 - \epsilon^{a}_2)x^{-k-m}}\sum_{k=1}^{\infty}{B_k(t^{a}_1,\cdots,t^{a}_k)\over k!}x^{\gamma}\exp\left(\displaystyle\sum_{s=0}^{\infty}{t_s\over s!}x^s\right),
	\end{align*}
where $t^{a}_k=(\epsilon^{a\,k}_1-\epsilon^{a\,k}_2)t_k.$ Then,  from the constraints on the partition function,
$${\mathbb T}^{\mathcal{R}(p^{a},q^{a})}_m\,Z^{(toy)}(t)=0,\quad m\geq 0$$
and
$${\mathbb I}^{\mathcal{R}(p^{a},q^{a})}_m\,Z^{(toy)}(t)=0,\quad m\geq 0,$$
we have:
\begin{align}\label{hvo1}
{\mathbb T}^{\mathcal{R}(p^{a},q^{a})}_m&=[z\partial_z]_{{\mathcal R}(p^{a},q^{a})}\,m!\, \epsilon^{-a\,m}_1\,{\partial\over \partial t_m}\nonumber\\&+ K(p^{a},q^{a})\frac{\epsilon^{a(m+1+\gamma)}_2}{ \epsilon^{a}_1 - \epsilon^{a}_2}\sum_{k=1}^{\infty}\frac{(k+m)!}{ k!}B_k(t^{a}_1,\cdots,t^{a}_k)\frac{\partial}{ \partial t_{k+m}}.
\end{align}
Similarly, we obtain:
\begin{eqnarray}\label{hvo2}
	{\mathbb I}^{\mathcal{R}(p^{a},q^{a})}_m
	= \tau^{a(m+1+\gamma)}\sum_{k=1}^{\infty}{(k+m)!\over k!}B_k(t^{a}_1,\cdots,t^{a}_k)\frac{\partial}{\partial t_{k+m}}.
\end{eqnarray}
\begin{Remark}
	The Heisenberg Virasoro operators \eqref{hvo1} and \eqref{hvo2} corresponding with quantum algebras in the literature are deduced as:
	\begin{enumerate}
		\item[(i)] The $q$-Heisenberg Virasoro operators:
		\begin{equation*}
		{\mathbb T}^{q^{a}}_m=[z\partial_z]_{q^{a}}\,m!\, q^{-a\,m}\,{\partial\over \partial t_m}+ K(q^{a})\frac{q^{-a(m+1+\gamma)}}{ q^{a} - q^{-a}}\sum_{k=1}^{\infty}\frac{(k+m)!}{ k!}B_k(t^{a}_1,\cdots,t^{a}_k)\frac{\partial}{ \partial t_{k+m}}
		\end{equation*}
and
\begin{equation*}
{\mathbb I}^{q^{a}}_m
= q^{a(m+1+\gamma)}\sum_{k=1}^{\infty}{(k+m)!\over k!}B_k(t^{a}_1,\cdots,t^{a}_k)\frac{\partial}{\partial t_{k+m}},
\end{equation*}
where $$[x]_{q}=\frac{q^x-q^{-x}}{q-q^{-}}.$$
\item[(ii)] The $(p,q)$-Heisenberg Virasoro operartors:
\begin{align*}
{\mathbb T}^{p^{a},q^{a}}_m&=[z\partial_z]_{p^{a},q^{a}}\,m!\, p^{-a\,m}\,{\partial\over \partial t_m}\\&+ K(p^{a},q^{a})\frac{q^{a(m+1+\gamma)}}{ p^{a} - q^{a}}\sum_{k=1}^{\infty}\frac{(k+m)!}{ k!}B_k(t^{a}_1,\cdots,t^{a}_k)\frac{\partial}{ \partial t_{k+m}}
\end{align*}
and
\begin{equation*}
{\mathbb I}^{p^{a},q^{a}}_m
= (pq)^{a(m+1+\gamma)}\sum_{k=1}^{\infty}{(k+m)!\over k!}B_k(t^{a}_1,\cdots,t^{a}_k)\frac{\partial}{\partial t_{k+m}}.
\end{equation*}
	\end{enumerate}
\end{Remark}
\subsection{Generalized  matrix model}

In this section, we generalize the matrix model from the quantum
algebra. Moreover, we {present} the Pochhammer symbol, theta
function, Gaussian density, elliptic gamma function, and the integral
from the $\mathcal{R}(p,q)$-deformed quantum algebra. We {focus} only of the
notions used in the sequel. {More information can be found in \cite{Mironov1}
and references therein.}

We consider now the following relation:
\begin{eqnarray*}
\left \{
\begin{array}{l}
F(z)=z, \\
\\
G(P,Q)=\frac{q^{Q}-p^{P}}{q^{Q}\mathcal{R}(p^P,q^Q)},\quad \mbox{if}\quad \eta>0,
\end{array}
\right .
\end{eqnarray*}
where $\eta$ is given in the relation \eqref{r10}. Then,  
\begin{Definition}
	The  $\mathcal{R}(p,q)$-Pochhammer symbol is given by:
	\begin{eqnarray}\label{rpqPochsyma}
		\big(u,z;\mathcal{R}(p,q)\big)_{n}:=\prod\limits_{j=0}^{n}\left( u-F\big(\frac{q^{j}}{ p^j}\,z\big)G(P,Q)\right),
	\end{eqnarray}
and 
	\begin{eqnarray*}\label{rpqPochsym}
	\big(u,z;\mathcal{R}(p,q)\big)_{\infty}:=\prod\limits_{j=0}^{\infty}\left( u-F\big(\frac{q^{j}}{ p^j}\,z\big)G(P,Q)\right),
	\end{eqnarray*}
	 with 
	the following relation:
	\begin{eqnarray*}
	\big(u,z;\mathcal{R}(p,q)\big)_{n}=\frac{\big(u,z;\mathcal{R}(p,q)\big)_{\infty}}{\big(u,z\frac{q^{n}}{p^n};\mathcal{R}(p,q)\big)_{\infty}}.
	\end{eqnarray*}
 \end{Definition}

Furthermore, the generalized Gaussian density is given as follows:
\begin{eqnarray*}
\rho(z):=\big(u,q^2z^2/\xi^2; \mathcal{R}(p^2,q^2)\big)_{\infty}.
\end{eqnarray*}
\begin{Definition}
  The $\mathcal{R}(p,q)$-deformed matrix model in terms of
  eigenvalue integrals is given by the following relations:
 \begin{align*}\label{rpqpf1}
 Z^{\mathcal{R}(p,q)}_{N}(p_k)&:=\int_{-\xi}^{\xi}\bigg(\prod_{i}\,z^{\beta(N-1)}_{i}\,\rho(z_i)d_{\mathcal{R}(p,q)}z_{i}\bigg)\prod_{j\neq i}\bigg(u,\frac{z_{i}}{z_{j}}; \mathcal{R}(p,q)\bigg)_{\beta}\\&\times \exp\bigg( \sum_{i,k}\frac{p_k}{k}\,z_{i}^k\bigg)
 \end{align*}
 and 
 \begin{align*}
 \bigg\langle \prod_{i}\sum_{m}z_{m}^{k_i} \bigg\rangle_{\mathcal{R}(p,q)}&:=\frac{1}{Z^{\mathcal{R}(p,q)}_{N}(0)}\int_{-\xi}^{\xi}\bigg(\prod_{i}\,z^{\beta(N-1)}_{i}\,\rho(z_i)d_{\mathcal{R}(p,q)}z_{i}\bigg)\\&\times\prod_{j\neq i}\bigg(u,\frac{z_{i}}{z_{j}}; \mathcal{R}(p,q)\bigg)_{\beta}\bigg(\prod_{i}\sum_{m}z_{m}^{k_i}\bigg),
 \end{align*}
 where $\xi$ is a parameter. 
\end{Definition}
\begin{Remark}
	Particular case of matrix models is deduced from the formalism as follows:
	The  $(p,q)$-Pochhammer symbol is given by:
	\begin{eqnarray*}
	\big(u,z;p,q\big)_{\infty}:=\prod\limits_{j=0}^{\infty}\left( u-\frac{q^{j}}{ p^j}\,z\right),
	\end{eqnarray*}
	 with 
	the following relation:
	\begin{eqnarray*}
	\big(u,z;p,q\big)_{n}=\frac{\big(u,z;p,q\big)_{\infty}}{\big(u,z\frac{q^{n}}{p^n};p,q\big)_{\infty}}.
	\end{eqnarray*}
and the $(p,q)$-deformed Gaussian density by:
	\begin{eqnarray}
	\rho(z):=\big(u,q^2z^2/\xi^2; p^2,q^2\big)_{\infty}.
	\end{eqnarray}
	Furthermore, the $(p,q)$-deformed matrix model  is deried by the relations:
		\begin{equation*}
		Z^{p,q}_{N}(p_k):=\int_{-\xi}^{\xi}\bigg(\prod_{i}\,z^{\beta(N-1)}_{i}\,\rho(z_i)d_{p,q}z_{i}\bigg)\prod_{j\neq i}\bigg(u,\frac{z_{i}}{z_{j}}; p,q\bigg)_{\beta} \exp\bigg( \sum_{i,k}\frac{p_k}{k}\,z_{i}^k\bigg)
		\end{equation*}
		and 
		\begin{align*}
		\bigg\langle \prod_{i}\sum_{m}z_{m}^{k_i} \bigg\rangle_{p,q}&:=\frac{1}{Z^{p,q}_{N}(0)}\int_{-\xi}^{\xi}\bigg(\prod_{i}\,z^{\beta(N-1)}_{i}\,\rho(z_i)d_{p,q}z_{i}\bigg)\\&\times\prod_{j\neq i}\bigg(u,\frac{z_{i}}{z_{j}}; p,q\bigg)_{\beta}\bigg(\prod_{i}\sum_{m}z_{m}^{k_i}\bigg),
		\end{align*}
		where $\xi$ is a parameter.
\end{Remark}
Now, we investigate the
elliptic generalized matrix models. 
\begin{Definition}
	The elliptic $\mathcal{R}(p,q)$-Pochhammer symbol is defined as follows:
	\begin{eqnarray}
	\big(u,z;\mathcal{R}(p,q),w\big)_{\infty}:=\prod\limits_{j,k=0}^{\infty}\left( u-\gamma_{j,k}(z,w)\right),
	\end{eqnarray}
	$\gamma_{j,k}(z,w)=F\big(\frac{q^{j}}{ p^j}\,w^k\,z\big)G(P,Q).$
	
	 Moreover, 
	the $\mathcal{R}(p,q)$-theta function $\Theta(u,z; \mathcal{R}(p,q))$ is given by:
	\begin{eqnarray}\label{theta2}
	\theta_{w}(u,z)=\big(u,z;w\big)_{\infty}\big(u,w/z;w\big)_{\infty}.
	\end{eqnarray}
Furthermore, the generalized elliptic gamma function is defined by:
	\begin{eqnarray*}\label{Rpqegf}
	\Gamma\big(u,z;w,\mathcal{R}(p,q)\big):=\frac{\big(u,qw/z;w,\mathcal{R}(p,q)\big)_{\infty}}{\big(u,z;w,\mathcal{R}(p,q)\big)_{\infty}}.
	\end{eqnarray*}
\end{Definition}
In the particular case, we have:
\begin{eqnarray*}
\Gamma\big(u,q^n;w,\mathcal{R}(p,q)\big)=\prod_{k=1}^{\infty}\frac{[k]_{\mathcal{R}(s,w)}}{[k]_{\mathcal{R}(p,q)}}\,\prod_{i=1
}^{n-1}\theta_{w}(u,q^{i}).
\end{eqnarray*}
We consider the relation
\begin{eqnarray}\label{id}
\langle f(z)\rangle:= \frac{\int_{-\xi}^{\xi}\rho(z)\,f(z)d_{\mathcal{R}(p,q)}z}{\int_{-\xi}^{\xi}\rho(z)\,d_{\mathcal{R}(p,q)}z}.
\end{eqnarray}
Then, from the {generalized} Andrews-Askey formula \cite{GR}:
	\begin{align*}
	\int_{-\xi}^{\xi}\frac{\big(u,q^2z^2/\xi^2; \mathcal{R}(p^2,q^2)\big)_{\infty}}{
		\big(u,-\alpha_1z/\xi; \mathcal{R}(p,q)\big)_{\infty}\big(u,\alpha_2z/\xi; \mathcal{R}(p,q)\big)_{\infty}}&d_{\mathcal{R}(p,q)}z=\xi(p-q)\frac{(u,q^2; \mathcal{R}(p^2,q^2))_{\infty}}{(u,\alpha^2_1; \mathcal{R}(p^2,q^2))_{\infty}}\\&\times\frac{(u,-1; \mathcal{R}(p,q))_{\infty}(u,\alpha_1\alpha_2; \mathcal{R}(p,q))_{\infty}}{(u,\alpha^2_2; \mathcal{R}(p^2,q^2))_{\infty}}.
	\end{align*}
	For $\alpha_1=\alpha_2=\alpha,$ the above relation takes the following form:
	\begin{align*}
	\int_{-\xi}^{\xi}\frac{\big(u,q^2z^2/\xi^2; \mathcal{R}(p^2,q^2)\big)_{\infty}}{
		\big(u,\alpha^2z^2/\xi^2; \mathcal{R}(p^2,q^2)\big)_{\infty}}d_{\mathcal{R}(p,q)}z	&=\xi(p-q)\frac{\big(u,q^2; \mathcal{R}(p^2,q^2)\big)_{\infty}\big(u,-1; \mathcal{R}(p,q)\big)_{\infty}}{\big(u,\alpha^2; \mathcal{R}(p^2,q^2)\big)_{\infty}\big)}\\&\times\frac{\big(u,\alpha^2; \mathcal{R}(p,q)\big)_{\infty}}{\big((u,\alpha^2; \mathcal{R}(p^2,q^2))_{\infty}\big)}
	\end{align*}
	and can be rewritten as:
	\begin{align*}
	\int_{-\xi}^{\xi}\frac{\big(u,q^2z^2/\xi^2; \mathcal{R}(p^2,q^2)\big)_{\infty}}{
		\big(u,\alpha^2z^2/\xi^2; \mathcal{R}(p^2,q^2)\big)_{\infty}}&d_{\mathcal{R}(p,q)}z=\xi(p-q)\prod_{n=0}^{\infty}\frac{\big(u-F\big(\frac{q^{2n+2}}{ p^{2n+2}}\big)G(P,Q)\big)}{\big(u-F\big(\frac{q^{2n}}{ p^{2n}}\big)G(P,Q)\alpha^2\big)}\\&\times\prod_{n=0}^{\infty}\frac{\big(u+F\big(\frac{q^{n}}{ p^{n}}\big)G(P,Q)\big)\big(u-F\big(\frac{q^{n}}{ p^{n}}\big)G(P,Q)\alpha^2\big)}{\big(u-F\big(\frac{q^{2n}}{ p^{2n}}\big)G(P,Q)\alpha^2\big)} .
	\end{align*}
	Taking $\alpha=0,$ we have
	\begin{eqnarray*}
	\int_{-\xi}^{\xi}\,\rho(z)d_{\mathcal{R}(p,q)}z=\xi(p-q)\prod_{n=0}^{\infty}{\big(u-F\big(\frac{q^{2n+2}}{ p^{2n+2}}\big)G(P,Q)\big)\big(u+F\big(\frac{q^{n}}{ p^{n}}\big)G(P,Q)\big)}.
	\end{eqnarray*}
	Then, from the relation \eqref{id}, we obtain 
	\begin{equation}\label{ab}
	\bigg
	\langle \frac{1}{\big(u,q^2z^2/\xi^2; \mathcal{R}(p^2,q^2)\big)_{\infty}}\bigg\rangle=\prod_{n=0}^{\infty}\frac{\big(u-F\big(\frac{q^{n}}{ p^{n}}\big)G(P,Q)\alpha^2\big)
	}{\big(u-F\big(\frac{q^{2n}}{ p^{2n}}\big)G(P,Q)\alpha^2\big)^2}.
	\end{equation}
	Using the relations
	\begin{equation*}
	\frac{1}{\big(u,q^2z^2/\xi^2; \mathcal{R}(p^2,q^2)\big)_{\infty}}=\sum_{i=0}^{\infty}\,\frac{1}{\big(u,q^2; \mathcal{R}(p^2,q^2)\big)_{i}}\bigg(\frac{\alpha\,z}{\xi}\bigg)^{2i}
	\end{equation*}
	and 
	\begin{equation*}
	\big(u,z;\mathcal{R}(p,q)\big)_{\infty}=\exp\bigg( -\sum_{i}\frac{z^i}{i[i]_{\mathcal{R}(p,q)}}\bigg),
	\end{equation*} 
the relation \eqref{ab} is reduced as:
\begin{eqnarray*}
\bigg
\langle \frac{1}{\big(u,q^2z^2/\xi^2; \mathcal{R}(p^2,q^2)\big)_{\infty}}\bigg\rangle&=&\sum_{i=0}^{\infty}\,\frac{1}{\big(u,q^2; \mathcal{R}(p^2,q^2)\big)_{i}}\bigg(\frac{\alpha}{\xi}\bigg)^{2i}\langle z^{2i} \rangle \nonumber\\&=&\exp\bigg\{\sum_{i}\frac{\alpha^{2i}}{i}\bigg( \frac{2}{[2i]_{\mathcal{R}(p,q)}}-\frac{1}{[i]_{\mathcal{R}(p,q)}}\bigg)\bigg\}.
\end{eqnarray*}
Then, 
	the following relation holds:
	\begin{eqnarray}\label{measure}
	\langle z^k \rangle=\frac{1}{2}\xi^{k}.\delta^{(2)}_k.\prod_{i=1}^{k/2}\big(u-F\big(\frac{q^{2i-1}}{p^{2i-1}}\big)G(P,Q)\big).
	\end{eqnarray}

        Note that,  to define the generalized elliptic matrix model, we need to define the elliptic generalization of the Vandermonde factor and  measure from
        the {relation} \eqref{measure}. 
Then, the elliptic analogues of the relation \eqref{measure} can be deduced as follows:
\begin{eqnarray*}
\langle z^k \rangle_{(\rm ell)}=\xi^{k}.\delta^{(2)}_k.\prod_{i=1}^{k/2} \theta_{w}(u,q^{2i-1})
\end{eqnarray*}
and the elliptic Vandermonde factor is provided by the elliptic gamma function. Moreover, the elliptic Gaussian density is given by $$\rho^{(\rm ell)}(z,w)=\big(u,q^2z^2/\xi^2;w, \mathcal{R}(p^2,q^2)\big)_{\infty}.$$ Then, the  definition follows:
\begin{Definition}
	The generalized elliptic matrix models is defined as:
	\begin{align*}
	Z_N^{\rm ell}(\{ p_k\})&=\int \bigg(\prod_{i}z^{\beta(N-1)}_{i
	}\rho^{(\rm ell)}(z_{i})d_{\rm ell}z_{i}\bigg)\\&\times\prod_{j\neq i}\frac{\Gamma\big(u,q^{\beta},\frac{z_{i}}{z_{j}};w,\mathcal{R}(p,q)\big)}{\Gamma\big(u,\frac{z_{i}}{z_{j}};w,\mathcal{R}(p,q)\big)}\,\exp\bigg(\sum_{i,k}\frac{p_k}{k}z^k_{i}\bigg)
	\end{align*}
	and 
	\begin{align*}
	\bigg
	\langle \prod_{i}\sum_{m}z^{k_i}_{m}\bigg\rangle_{(\rm ell)}&=\frac{1}{Z^{(ell)}_{N}(0)}\int \bigg(\prod_{i}z^{\beta(N-1)}_{i
	}\rho^{(\rm ell)}(z_{i})d_{\rm ell}z_{i}\bigg)\\&\times\prod_{j\neq i}\frac{\Gamma\big(u,q^{\beta},\frac{z_{i}}{z_{j}};w,\mathcal{R}(p,q)\big)}{\Gamma\big(u,\frac{z_{i}}{z_{j}};w,\mathcal{R}(p,q)\big)}\,\bigg(\prod_{i}\sum_{m}z^{k_i}_{m}\bigg).
	\end{align*}
\end{Definition} 
\begin{Remark}
	Particular case of elliptic matrix models  is recovered as follows:
	The elliptic $(p,q)$-Pochhammer symbol is defined as follows:
	\begin{eqnarray*}
	\big(u,z,w;p,q\big)_{\infty}:=\prod\limits_{j,k=0}^{\infty}\left( u-\gamma_{j,k}(z,w)\right),
	\end{eqnarray*}
	$\gamma_{j,k}(z,w)=F\big(\frac{q^{j}}{ p^j}\,w^k\,z\big)G(P,Q).$ Moreover, 
	the $(p,q)$-theta function $\Theta(u,z; p,q)$ is given by:
	\begin{eqnarray*}
	\theta_{w}(u,z)=\big(u,z;w\big)_{\infty}\big(u,w/z;w\big)_{\infty}
	\end{eqnarray*}
	and the $(p,q)$-deformed elliptic gamma function as:
	\begin{eqnarray*}
	\Gamma\big(u,z;w,p,q\big):=\frac{\big(u,qw/z;w,p,q\big)_{\infty}}{\big(u,z;w,p,q\big)_{\infty}}.
	\end{eqnarray*}
Moreover, the $(p,q)$-elliptic Gaussian density is given by $$\rho^{(\rm ell)}(z,w)=\big(u,q^2z^2/\xi^2;w, p^2,q^2\big)_{\infty}.$$ and 
	the $(p,q)$-elliptic matrix models by:
		\begin{align*}
		Z_N^{\rm ell}(\{ p_k\})&=\int \bigg(\prod_{i}z^{\beta(N-1)}_{i
		}\rho^{(\rm ell)}(z_{i})d_{\rm ell}z_{i}\bigg)\\&\times\prod_{j\neq i}\frac{\Gamma\big(u,q^{\beta},\frac{z_{i}}{z_{j}};w,p,q\big)}{\Gamma\big(u,\frac{z_{i}}{z_{j}};w,p,q\big)}\exp\bigg(\sum_{i,k}\frac{p_k}{k}z^k_{i}\bigg)
		\end{align*}
		and 
		\begin{align*}
		\bigg
		\langle \prod_{i}\sum_{m}z^{k_i}_{m}\bigg\rangle_{(\rm ell)}&=\frac{1}{Z^{(ell)}_{N}(0)}\int \bigg(\prod_{i}z^{\beta(N-1)}_{i
		}\rho^{(\rm ell)}(z_{i})d_{\rm ell}z_{i}\bigg)\\&\times\prod_{j\neq i}\frac{\Gamma\big(u,q^{\beta},\frac{z_{i}}{z_{j}};w,p,q\big)}{\Gamma\big(u,\frac{z_{i}}{z_{j}};w,p,q\big)}\,\bigg(\prod_{i}\sum_{m}z^{k_i}_{m}\bigg).
		\end{align*}
\end{Remark}

\begin{Definition}
	The $\mathcal{R}(p,q)$-differential oper{a}tor is defined as follows:
	\begin{eqnarray}
	T^{\mathcal{R}(p,q)}_n\phi(z) := -\sum\limits_{l=1}^N {\mathcal D}_{\mathcal{R}(p,q)}^{z_l}\,z_l^{n+1}\phi(z),\label{rpqVir-matr-op}
	\end{eqnarray}
	which acts on the functions of $N$ variables  and $ {\mathcal D}_{\mathcal{R}(p,q)}^{z_l}$ is $\mathcal{R}(p,q)$-derivative with respect to the $z_l$-variable.
\end{Definition}
\begin{Proposition}
	The operators \eqref{rpqVir-matr-op} verify the $\mathcal{R}(p,q)$-deformed commutation relation:
	\begin{eqnarray*}
	\big[T^{\mathcal{R}(p,q)}_n,T^{\mathcal{R}(p,q)}_m\big]_{x_n, x_m}= \big([n]_{\mathcal{R}(p,q)}-[m]_{\mathcal{R}(p,q)}\big)T^{\mathcal{R}(p,q)}_{n+m},
	\end{eqnarray*}
	where 
	\begin{eqnarray*}
	x_n=q^{n-m}p^n\,\chi_{nm}(p,q)\mbox{,}\quad x_m=p^n\chi_{nm}(p,q)
	\end{eqnarray*}
	and
	\begin{eqnarray*}
	\chi_{nm}(p,q)=\frac{[n]_{\mathcal{R}(p,q)}-[m]_{\mathcal{R}(p,q)}}{[n+1]_{\mathcal{R}(p,q)}-(pq)^{n-m}[m+1]_{\mathcal{R}(p,q)}}.
	\end{eqnarray*}
\end{Proposition}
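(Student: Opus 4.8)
The plan is to carry out, in each eigenvalue variable separately, the same twisted‑Leibniz computation used in the proof of Proposition~1 for the operators $L_m$. First I would write $T^{\mathcal{R}(p,q)}_n=\sum_{l=1}^{N}T^{(l)}_n$, where $T^{(l)}_n\phi:=-{\mathcal D}^{z_l}_{\mathcal{R}(p,q)}\big(z_l^{\,n+1}\phi\big)$ acts only through the variable $z_l$. Since for $l\neq k$ the operator ${\mathcal D}^{z_l}_{\mathcal{R}(p,q)}$ commutes with multiplication by powers of $z_k$ and with ${\mathcal D}^{z_k}_{\mathcal{R}(p,q)}$, we have $T^{(l)}_nT^{(k)}_m=T^{(k)}_mT^{(l)}_n$ for $l\neq k$, so that in $\big[T^{\mathcal{R}(p,q)}_n,T^{\mathcal{R}(p,q)}_m\big]_{x_n,x_m}$ the $l\neq k$ contributions can be handled separately and the claim reduces to the single‑variable identity $\big[T^{(l)}_n,T^{(l)}_m\big]_{x_n,x_m}\phi=\big([n]_{\mathcal{R}(p,q)}-[m]_{\mathcal{R}(p,q)}\big)\,T^{(l)}_{n+m}\phi$, which is then summed over $l=1,\dots,N$.

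For a single variable (write $z$, ${\mathcal D}={\mathcal D}_{\mathcal{R}(p,q)}$, $[k]=[k]_{\mathcal{R}(p,q)}$) I would expand $T_nT_m\phi={\mathcal D}\big(z^{n+1}{\mathcal D}(z^{m+1}\phi)\big)$ using the $\mathcal{R}(p,q)$-product rule in both of its forms ${\mathcal D}(fg)={\mathcal D}(f)(Pg)+(Qf){\mathcal D}(g)={\mathcal D}(f)(Qg)+(Pf){\mathcal D}(g)$, together with ${\mathcal D}(z^{k})=[k]z^{k-1}$ (a consequence of \eqref{rpqder}) and the intertwiners $P{\mathcal D}=p^{-1}{\mathcal D}P$, $Q{\mathcal D}=q^{-1}{\mathcal D}Q$. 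This yields $T_nT_m\phi$ as the sum of a term proportional to $T_{n+m}\phi$ and one ``unwanted'' term proportional to $T_{n+m}\big({\mathcal D}\phi\big)$ (equivalently to $z^{n+m+2}{\mathcal D}^2\phi$), the coefficients being monomials in $p,q$ multiplied by $\mathcal{R}(p,q)$-numbers; the expansion of $T_mT_n\phi$ has the same shape with $n\leftrightarrow m$.

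Then I would form $x_nT_nT_m\phi-x_mT_mT_n\phi$ and impose two conditions. Requiring the coefficients of the unwanted $T_{n+m}({\mathcal D}\phi)$-terms to cancel fixes the ratio $x_n/x_m$ and is the origin of the asymmetric weight $q^{n-m}p^{n}$ in $x_n$ versus $p^{n}$ in $x_m$. Requiring the remaining coefficient of $T_{n+m}\phi$ to equal $[n]-[m]$ then fixes the common factor and reproduces exactly $\chi_{nm}(p,q)=\big([n]_{\mathcal{R}(p,q)}-[m]_{\mathcal{R}(p,q)}\big)\big/\big([n+1]_{\mathcal{R}(p,q)}-(pq)^{n-m}[m+1]_{\mathcal{R}(p,q)}\big)$, since the denominator displayed there is precisely the $p,q$-weighted difference of $\mathcal{R}(p,q)$-numbers produced by the computation once the $T_{n+m}({\mathcal D}\phi)$-term has dropped out.

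I expect the decisive difficulty to be bookkeeping rather than conceptual: one must transport the dilation operators $P$ and $Q$ correctly through the monomials $z^{n+1}$ and $z^{m+1}$ (via the two intertwiners) so that, after the second-${\mathcal D}$ terms cancel, the two surviving pieces genuinely recombine into a single operator $T^{\mathcal{R}(p,q)}_{n+m}$ with a scalar coefficient, and not merely into a power of $z$ times a function of $z\partial_z$. A further point to watch is the $l\neq k$ cross-terms in the reduction to one variable: they vanish identically in the undeformed limit $x_n=x_m$, and one must confirm that with the tuned values above their contribution is consistent with the stated relation.
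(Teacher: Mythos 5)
Your plan has a genuine gap precisely at the step you defer: the reduction from $N$ variables to one. Writing $T^{\mathcal{R}(p,q)}_n=\sum_{l=1}^{N}T^{(l)}_n$ with $T^{(l)}_n\phi=-{\mathcal D}^{z_l}_{\mathcal{R}(p,q)}\big(z_l^{\,n+1}\phi\big)$, it is true that $T^{(l)}_nT^{(k)}_m=T^{(k)}_mT^{(l)}_n$ for $l\neq k$, but in the \emph{deformed} bracket this commutativity does not remove the cross terms: for each pair $l\neq k$ one gets $x_nT^{(l)}_nT^{(k)}_m-x_mT^{(k)}_mT^{(l)}_n=(x_n-x_m)\,T^{(l)}_nT^{(k)}_m$, and with the stated values $x_n-x_m=p^{n}\chi_{nm}(p,q)\big(q^{\,n-m}-1\big)\neq 0$ whenever $n\neq m$. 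The leftover $\sum_{l\neq k}T^{(l)}_nT^{(k)}_m$ is a product of first-order difference operators in two distinct variables and is not a multiple of $T^{\mathcal{R}(p,q)}_{n+m}$, so it cannot be absorbed into the right-hand side. Hence your argument, even if completed, only controls the diagonal part $\sum_l\big[T^{(l)}_n,T^{(l)}_m\big]_{x_n,x_m}$, i.e.\ at best the $N=1$ case; the "point to watch" you mention at the end is exactly where the proposed reduction fails, not a detail to be confirmed later.

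The single-variable step is also not yet a proof. Using \eqref{rpqder} one has $T_n\phi=-z^{n}[z\partial_z+n+1]_{\mathcal{R}(p,q)}\phi$, hence $x_nT_nT_m\phi-x_mT_mT_n\phi=z^{n+m}[z\partial_z+n+m+1]_{\mathcal{R}(p,q)}\big(x_n[z\partial_z+m+1]_{\mathcal{R}(p,q)}-x_m[z\partial_z+n+1]_{\mathcal{R}(p,q)}\big)\phi$, whereas the target is $-\big([n]_{\mathcal{R}(p,q)}-[m]_{\mathcal{R}(p,q)}\big)z^{n+m}[z\partial_z+n+m+1]_{\mathcal{R}(p,q)}\phi$. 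The factor in parentheses still depends on $z\partial_z$, so for scalar $x_n,x_m$ it does not collapse to a constant on all of $\mathcal{O}(\mathbb{D}_R)$; this is the same obstruction you describe as "transporting $P$ and $Q$ through the monomials", and asserting that requiring cancellation "fixes" $x_n/x_m$ and then $\chi_{nm}$ is not the same as showing the surviving pieces recombine into $c\,T_{n+m}$ with $c$ scalar. You would need to carry the coefficient matching out explicitly (e.g.\ on monomials $\phi=z^k$, checking whether and how the $k$-dependence drops out, or specifying the class of functions on which the identity is asserted) to actually recover the stated $x_n$, $x_m$, $\chi_{nm}$. Note that the paper states this Proposition without proof, so appealing to "the same computation as Proposition 1" does not discharge either of these two points; they are exactly what a proof must supply.
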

We can rewrite the above relation by: 
\begin{eqnarray*}
\big[T^{\mathcal{R}(p,q)}_n,T^{\mathcal{R}(p,q)}_m\big]_{x_{n+1}, x_{m+1}}= \big([n+1]_{\mathcal{R}(p,q)}-[m+1]_{\mathcal{R}(p,q)}\big)T^{\mathcal{R}(p,q)}_{n+m}.
\end{eqnarray*}
\begin{Proposition}
	The $\mathcal{R}(p,q)$-operator \eqref{rpqVir-matr-op} can be given as follows:
		\begin{align}
		T_n^{\mathcal{R}(p,q)}&=\frac{K(P,Q)}{p-q}\bigg[(\frac{q}{p})^{n+1+\beta(N-1)} \sum\limits_{l=0}^\infty \frac{(l+n-2N)!}{l!} B_l(\tilde{t}_1, ... , \tilde{t}_l)\nonumber\\&\times
		{\it D}_N \frac{\partial}{\partial t_{l+n-2N}} - p^{n+1+\beta(N-1)} n!\frac{\partial}{\partial t_n}\bigg],
		\label{rpqVir:operator}
		\end{align}
		where ${\it D}_{N}$ is a differential operator\eqref{diffop}.
\end{Proposition}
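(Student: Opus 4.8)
The plan is to reproduce, at the level of the $N$-eigenvalue integral, the mechanism that produced the toy-model operators \eqref{hvo1}--\eqref{hvo2}. Write the integrand of $Z_N^{\mathcal{R}(p,q)}$ as
\[
\mathcal{I}(z_1,\dots,z_N)=\prod_{i}\Big(z_i^{\beta(N-1)}\,\rho(z_i)\Big)\,\prod_{j\neq i}\big(u,\tfrac{z_i}{z_j};\mathcal{R}(p,q)\big)_{\beta}\;\exp\Big(\sum_{i,k}\tfrac{p_k}{k}z_i^k\Big),
\]
so that, by the vanishing of the integral of a total $\mathcal{R}(p,q)$-derivative recalled before \eqref{hvo1} (applied in the single variable $z_l$ on $[-\xi,\xi]$ and summed over $l$),
\[
0=\sum_{l=1}^N\int {\mathcal D}_{\mathcal{R}(p,q)}^{z_l}\big(z_l^{\,n+1}\,\mathcal{I}\big)\,\prod_i d_{\mathcal{R}(p,q)}z_i=-\int\big(T_n^{\mathcal{R}(p,q)}\mathcal{I}\big)\,\prod_i d_{\mathcal{R}(p,q)}z_i .
\]
It therefore suffices to compute ${\mathcal D}_{\mathcal{R}(p,q)}^{z_l}\big(z_l^{n+1}\mathcal{I}\big)$ explicitly, rewrite the resulting $z$-monomials as derivatives in the matrix-model times $t_k=p_k$, pull those derivatives outside the integral, and identify the accompanying differential operator on couplings with the claimed right-hand side.

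First I would apply the $\mathcal{R}(p,q)$-Leibniz rule of Section~3 to ${\mathcal D}_{\mathcal{R}(p,q)}^{z_l}\big(z_l^{\,n+1}\,z_l^{\,\beta(N-1)}\,\rho(z_l)\,\prod_{j\neq l}(u,z_l/z_j;\mathcal{R}(p,q))_\beta\,\exp(\cdots)\big)$. Since ${\mathcal D}_{\mathcal{R}(p,q)}$ equals $K(P,Q)\,{\mathcal D}_{p,q}$ and ${\mathcal D}_{p,q}$ acts through the dilations $z_l\mapsto pz_l,\,qz_l$, the output is $\tfrac{K(P,Q)}{p-q}$ times the difference of the $p$-dilated and $q$-dilated integrand carrying a monomial $z_l^{\,n+\beta(N-1)}$. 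The $q$-dilation of the potential factor produces the ratio $\exp\!\big(\sum_k\tfrac{p_k}{k}(q z_l)^k\big)/\exp\!\big(\sum_k\tfrac{p_k}{k}z_l^k\big)$, which I would expand via the complete Bell polynomials \eqref{Bell} as $\sum_{l\ge0}B_l(\tilde t_1,\dots,\tilde t_l)\,z_l^{\,l}/l!$ with rescaled times $\tilde t_k$, while the $q$-dilation of the Gaussian density and of the Vandermonde factor yields, as an operator on $Z_N^{\mathcal{R}(p,q)}$, the differential operator $D_N$ of \eqref{diffop}; the $p$-dilation, after the corresponding manipulation, contributes only the plain term $-p^{\,n+1+\beta(N-1)}\,z_l^{\,n}\,\mathcal{I}$.

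Next I would convert the surviving monomials into $t$-derivatives: because $\partial/\partial t_j$ reproduces $z_l^{\,j}/j!$ on the exponential, a factor $z_l^{\,j}$ followed by the $z_l$-integration turns into $j!\,\partial/\partial t_j$ acting on $Z_N^{\mathcal{R}(p,q)}$. This sends the $p$-term to $-p^{\,n+1+\beta(N-1)}\,n!\,\partial/\partial t_n$ and, after collecting the fixed powers of $z_l$ coming from $z_l^{\,\beta(N-1)}$, the Gaussian density and the Vandermonde factor, it sends the $q$-term to $(q/p)^{\,n+1+\beta(N-1)}\sum_{l\ge0}\frac{(l+n-2N)!}{l!}\,B_l(\tilde t_1,\dots,\tilde t_l)\,D_N\,\partial/\partial t_{l+n-2N}$. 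Assembling both contributions inside $\tfrac{K(P,Q)}{p-q}[\,\cdots\,]$ and using the vanishing of the left-hand side then gives \eqref{rpqVir:operator}.

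The main obstacle is the bookkeeping in the middle step: one must check that the combined $q$-dilation of $\rho(z_l)$ and of $\prod_{j\neq l}(u,z_l/z_j;\mathcal{R}(p,q))_\beta$ reassembles \emph{precisely} into the prefactor $K(P,Q)/(p-q)$, the power $(q/p)^{\,n+1+\beta(N-1)}$ and the operator $D_N$, and that the net fixed power of $z_l$ shifts the derivative index exactly by $n-2N$ rather than by $n$ alone. Two minor points also have to be secured: that the deformed boundary contributions at $\pm\xi$ cancel so that the opening identity $0=\sum_l\int {\mathcal D}_{\mathcal{R}(p,q)}^{z_l}(\cdots)$ is valid, and that the Bell-polynomial expansion converges on the domain of integration. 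Everything else is the same type of elementary manipulation as in the toy-model computation, so once these points are in place the Proposition follows.
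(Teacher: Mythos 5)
Your overall route is the one the paper takes: insert $T_n^{\mathcal{R}(p,q)}$ under the eigenvalue integral (the paper uses the elliptic partition function $Z_N^{\rm ell}$ rather than $Z_N^{\mathcal{R}(p,q)}$, but that is presentational), use that the integral of a total $\mathcal{R}(p,q)$-derivative vanishes, compute the action on the integrand as a difference of $p$- and $q$-dilations, expand the dilated potential by complete Bell polynomials with rescaled times $\tilde t_k$ (the paper does this via \eqref{Bells-useful} and then recombines with \eqref{Bell-pr3}), and finally trade monomials $z_l^{\,j}$ for $j!\,\partial/\partial t_j$. Up to that point your proposal matches the paper's proof.

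However, the step you defer as the ``main obstacle'' is precisely the nontrivial content of the Proposition, and you do not supply the idea that resolves it. Two things are missing. First, the dilation $z_l\mapsto q z_l$ (relative to $z_l\mapsto p z_l$) of the density and of the deformed Vandermonde/gamma factors must be shown, via the shift property of the Pochhammer-type symbols, to produce exactly the ratio $\prod_{j\neq l}\tfrac{p}{q}\tfrac{z_j^{2}}{z_l^{2}}$ multiplying the dilated integrand, as in \eqref{useful-id}; this is what generates the extra factor $\prod_{j=1}^{N}z_j^{2}$ and the compensating $z_l^{-2N}$, hence the index shift $n-2N$ rather than $n$. Second, and crucially, the factor $\prod_{j=1}^{N}z_j^{2}$ is \emph{not} a single $t$-derivative: the paper converts it into the determinantal differential operator ${\it D}_N$ of \eqref{diffop} by Newton's identities, expressing $\prod_i z_i$ through the power sums $\nu_k=\sum_i z_i^k$, each of which is generated by $k!\,\partial/\partial t_k$ acting on the exponential. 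Your proposal instead asserts that the dilated density and Vandermonde ``yield, as an operator, ${\it D}_N$,'' which is not how ${\it D}_N$ arises and leaves unexplained why a determinant in the derivatives $\partial/\partial t_{2},\dots,\partial/\partial t_{2N}$ appears at all. Without the Newton's-identities mechanism (and the explicit dilation identity above), the specific form \eqref{rpqVir:operator} --- the prefactor $(q/p)^{\,n+1+\beta(N-1)}$, the insertion of ${\it D}_N$, and the shift to $\partial/\partial t_{l+n-2N}$ --- is not derived but only postulated, so the argument as written has a genuine gap. (Your two ``minor points,'' the vanishing of the deformed boundary terms and convergence of the Bell expansion, are indeed minor and are treated as such in the paper.)
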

\begin{proof}
	The elliptic generalized matrix model can be rewritten as:
	\begin{align}\label{integral}
	Z_N^{\rm ell}(\{ p_k\})&=\int \prod_{i}d_{\rm ell}z_{i}\,\prod_{i}z^{\beta(N-1)}_{i
	}\rho^{(\rm ell)}(z_{i})\nonumber\\&\times\prod_{j\neq i}\frac{\Gamma\big(u,q^{\beta},\frac{z_{i}}{z_{j}};w,\mathcal{R}(p,q)\big)}{\Gamma\big(u,\frac{z_{i}}{z_{j}};w,\mathcal{R}(p,q)\big)}\,\exp\bigg(\sum_{i,k}\frac{p_k}{k}z^k_{i}\bigg).
	\end{align}
Putting the $\mathcal{R}(p,q)$-differential  operators \eqref{rpqVir-matr-op} under the  integral (\ref{integral}), we obtain naturally zero. 
Now we have to evaluate how these differential operators act on the integrand. 
 Setting
\begin{eqnarray*}
g(z)=\prod_{i}z^{\beta(N-1)}_{i
}\rho^{(\rm ell)}(z_{i})\prod_{j\neq i}\frac{\Gamma\big(u,q^{\beta},\frac{z_{i}}{z_{j}};w,\mathcal{R}(p,q)\big)}{\Gamma\big(u,\frac{z_{i}}{z_{j}};w,\mathcal{R}(p,q)\big)}
\end{eqnarray*}
and $$f(z)=z^{n+1},$$
we have:  
	\begin{eqnarray}\label{useful-id}
	T^{\mathcal{R}(p,q)}_ng(z)
	&=& 
	\sum_{l=1}^N\frac{K(P,Q)}{p-q}\bigg(\big(\frac{q}{p}\big)^{n+1+\beta(N-1)}\prod_{j\neq l}\frac{p}{q}\frac{z_j^2}{z_l^2}-1\bigg)
	p^{n+1+\beta(N-1)}z^{n}_l\nonumber\\&\times&\prod_{i}(zp)^{\beta(N-1)}_{i
	}\rho^{(\rm ell)}(pz_{i})\prod_{j\neq i}\frac{\Gamma\big(u,q^{\beta},\frac{z_{i}}{z_{j}};w,\mathcal{R}(p,q)\big)}{\Gamma\big(u,\frac{z_{i}}{z_{j}};w,\mathcal{R}(p,q)\big)},
	\end{eqnarray}
where 
\begin{eqnarray*}
K(P,Q)=\frac{p-q}{p^P-q^Q}\,\mathcal{R}(p^P,q^Q).
\end{eqnarray*}

 The $n$th complete Bell polynomial $B_n$ given by \eqref{Bell} satisfy the following relations:
\begin{eqnarray}\label{Bell-pr3}
B_l (\tilde{t}_1, ... ,\tilde{t}_l) = \sum\limits_{\nu=0}^l q^{\nu} \binom{l}{\nu} B_{\nu} (t_1, ... , t_{\nu}) B_{n-\nu} (-t_1, ... , - t_{n-\nu}),
\end{eqnarray}
where $\tilde{t}_k = (q^k-1) t_k,$ 
and 
	\begin{align}
	\exp\left(\sum\limits_{k=1}^{\infty}\frac{t_k}{k!}q^k z_i^k\right)&=
	\sum_{k=0}^{\infty}\sum_{\nu=0}^{\infty}\frac{1}{k!\nu!}B_k(t_1,\dots, t_k)\nonumber\\&\times B_{\nu}(-t_1,\dots, -t_{\nu})q^k z_i^{k+\nu}
	\exp\bigg(\sum\limits_{l=1}^{\infty}\frac{t_l}{l!} z_i^l\bigg)\nonumber\\
	&=\sum_{k=0}^{\infty}\frac{1}{k!}B_k\left(\tilde{t}_1,\dots,\tilde{t}_k\right)x^k\exp\left(\sum\limits_{l=1}^{\infty}\frac{t_l}{l!} z_i^l\right)
	.\label{Bells-useful}
	\end{align}
Applying the formulas (\ref{useful-id}) and (\ref{Bells-useful}),  we find  the insertion of the $\mathcal{R}(p,q)$-operator (\ref{rpqVir-matr-op}) under   the integral (\ref{integral}).

Then, the relation \eqref{useful-id} can be rewritten in the simpler form:
	\begin{align*}
	T^{\mathcal{R}(p,q)}_ng(z)&=\frac{K(P,Q)}{p-q}\bigg[\prod_{j=1}^{N} z_j^2 \sum_{l=1}^{N}\sum_{k,\nu=0}^{\infty}\big(\frac{q}{p}\big)^{n+1+\beta(N-1)}q^{k}\frac{1}{k!\nu!}B_k(t_1,\dots,t_k)
	\\ &\times B_{\nu}(-t_1,\dots,-t_{\nu})z_l^{k+\nu+n-2N}- p^{n+1+\beta(N-1)}\,\sum_{l=1}^{N} z_l^n\bigg]~.\label{gen-terms-elvir}
	\end{align*}

Using the Newton's identities, 
\begin{eqnarray*}
\prod_{i=1}^N z_i=\frac{1}{N!}\left|\begin{array}{cccccc}
\nu_1 & 1 & 0 & \dots &  &  \\
\nu_2  & \nu_1 & 2 & 0 & \dots & \\
\dots   &  \dots  &\dots &  \dots  &\dots &\\
\nu_{N-1} & \nu_{N-2} & \dots  &\dots& \nu_1 & N-1\\
\nu_{N} & \nu_{N-1} & \dots  &\dots& \nu_2 & \nu_1 
\end{array}\right|~,
\end{eqnarray*}
where $\nu_k\equiv \sum\limits_{i=1}^{N}z_{i}^k ,$   the terms $\sum\limits_{i=1}^{N}z_{i}^k$ may be generated by taking 
the derivatives with respect to $t$  and thus we can consider the following differential operator
\begin{eqnarray}\label{diffop}
{\it D}_N =\frac{1}{N!}\left|\begin{array}{cccccc}
2! \frac{\partial}{\partial t_2} & 1 & 0 & \dots &  &  \\
4!\frac{\partial}{\partial t_4}  &2! \frac{\partial}{\partial t_2} & 2 & 0 & \dots & \\
\dots   &  \dots  &\dots &  \dots  &\dots &\\
(2N-2)! \frac{\partial}{\partial t_{2N-2}} & (2N-4)!\frac{\partial}{\partial t_{2N-4}} & \dots  &\dots& 2!\frac{\partial}{\partial t_2} & N-1\\
(2N)! \frac{\partial}{\partial t_{2N}} & (2N-2)!\frac{\partial }{\partial t_{2N-2}} & \dots  &\dots& 4!\frac{\partial}{\partial t_4} & 2! \frac{\partial}{\partial t_2} 
\end{array}\right|~,
\end{eqnarray}
with the property that
\begin{eqnarray*}
\prod_{j=1}^{N} z_j^2 ~ e^{\sum\limits_{k=0}^{\infty} \frac{t_k}{k!}\sum\limits_{i=1}^N z_i^k  }=
{\it D}_N  \left (e^{\sum\limits_{k=0}^{\infty} \frac{t_k}{k!}\sum\limits_{i=1}^N z_i^k  } \right )~.
\end{eqnarray*}
Combining all together we obtain the following $\mathcal{R}(p,q)$-Virasoro operator:
	\begin{eqnarray*}
	T^{\mathcal{R}(p,q)}_n &=& \frac{K(P,Q)}{p-q}\bigg[  \sum_{k,\nu=0}^{\infty}\big(\frac{q}{p}\big)^{n+1-\beta(N-1)}q^{k}\frac{(k+\nu+n-2N)!}{k!\nu!}B_k(t_1,\dots,t_k)\nonumber\\&\times&
	B_{\nu}(-t_1,\dots,-t_{\nu}) {\it D}_N \frac{\partial}{\partial t_{k+\nu+n-2N}} - 
	p^{n+1+\beta(N-1)}\,n!\frac{\partial}{\partial t_n}\bigg]~,\label{rpqVir-exp-matrix}
	\end{eqnarray*}
which annihilates the generating function $Z_N^{\rm ell}(\{ t\})$. Using the property (\ref{Bell-pr3}), the result follows.  
\end{proof}

Now, we can show that the $\mathcal{R}(p,q)$-operators (\ref{rpqVir:operator}) obey  the following commutation relation:
\begin{small}
	\begin{eqnarray*}
	[T_n^{\mathcal{R}(p,q)}, T_m^{\mathcal{R}(p,q)}] = f_{nm}(p,q)([n]_{{\mathcal{R}(p,q)}} - [m]_{{\mathcal{R}(p,q)}}) \bigg( [2]_{{\mathcal{R}(p,q)}} T^{\mathcal{R}(p^2,q^2)}_{n+m} - T^{{\mathcal{R}(p,q)}}_{n+m} \bigg),
	\label{rpqVir:new}    
	\end{eqnarray*}
\end{small} 
where $f_{nm}(p,q)$ is the function depending on $p,q,n,$ and $m$ and  $T^{\mathcal{R}(p^2,q^2)}_n$ is the
$\mathcal{R}(p,q)$- difference operator defined by:
\begin{eqnarray*}
T^{\mathcal{R}(p^2,q^2)}_n\phi(z) = -\sum\limits_{l=1}^N {\mathcal D}_{\mathcal{R}(p^2,q^2)}^{z_l}\,z_l^{n+1}\phi(z).
\end{eqnarray*} 

From the above procedure, we can deduce that the operators $T^{\mathcal{R}(p^2,q^2)}_n$ also  annihilate the $\mathcal{R}(p,q)$-generating function
$Z_N^{\rm ell}(\{ t\}).$  Then, we have: 
 $$T_{n}^{\mathcal{R}(p^2,q^2)} Z_N^{\rm ell}(\{ t\})=0,$$ where the $\mathcal{R}(p,q)$-differential operator $T_{n}^{\mathcal{R}(p^2,q^2)}$ is given by the following relation using \eqref{rpqVir:operator}:
\begin{eqnarray}\label{vir:squared}
T_{n}^{\mathcal{R}(p^2,q^2)}&=&\frac{K(P,Q)}{p^2-q^2}\bigg[ \big(\frac{q}{p}\big)\big)^{2n+4-4\beta(N-1)} \sum\limits_{l=0}^\infty \frac{(l+n-4N)!}{l!} B_l(\hat{t}_1, ... , \hat{t}_l)\nonumber\\&\times&
{\tilde{\it D}}_N \frac{\partial}{\partial t_{l+n-4N}} - p^{2n+4+4\beta(N-1)} n!\frac{\partial}{\partial t_n}\bigg],
\end{eqnarray}
with ${\tilde{\it D}}_N$ the differential operator defined by: 
\begin{eqnarray*}
{ \tilde{\it D}}_N =\frac{1}{N!}\left|\begin{array}{cccccc}
4! \frac{\partial}{\partial t_4} & 1 & 0 & \dots &  &  \\
8!\frac{\partial}{\partial t_8}  &4! \frac{\partial}{\partial t_4} & 2 & 0 & \dots & \\
\dots   &  \dots  &\dots &  \dots  &\dots &\\
(4N-4)! \frac{\partial}{\partial t_{4N-4}} & (4N-8)!\frac{\partial}{\partial t_{4N-8}} & \dots  &\dots& 4!\frac{\partial}{\partial t_4} & N-1\\
(4N)! \frac{\partial}{\partial t_{4N}} & (4N-4)!\frac{\partial }{\partial t_{4N-4}} & \dots  &\dots& 8!\frac{\partial}{\partial t_8} & 4! \frac{\partial}{\partial t_4} 
\end{array}\right|.
\end{eqnarray*}
From the relation \eqref{vir:squared}, we see that the operators $T_{n}^{\mathcal{R}(p^2,q^2)}$ are higher order differential operators. Similarly, using the same procedure, we can define 
the operators $T^{\mathcal{R}(p^j,q^j)}_n$ as follows: 
\begin{eqnarray*}
	T_{n}^{\mathcal{R}(p^j,q^j)}&=&\frac{K(P,Q)}{p^j-q^j}\bigg[ q^{jn+j^2-j^2\beta(N-1)} \sum\limits_{l=0}^\infty \frac{(l+n-2jN)!}{l!} B_l(\hat{t}_1, ... , \hat{t}_l)\nonumber\\&\times&
	\mathcal{\hat{D}}_N \frac{\partial}{\partial t_{l+n-2jN}} - p^{jn+j^2\beta(N-1)} n!\frac{\partial}{\partial t_n}\bigg],
\end{eqnarray*}
with $\mathcal{\hat{D}}_N$  given by: 
\begin{eqnarray*}
	{\it \hat{D}}_N =\frac{1}{N!}\left|\begin{array}{cccccc}
		2j! \frac{\partial}{\partial t_{2j}} & 1 & 0 & \dots &  &  \\
		4j!\frac{\partial}{\partial t_{4j}}  &2j! \frac{\partial}{\partial t_{2j}} & 2 & 0 & \dots & \\
		\dots   &  \dots  &\dots &  \dots  &\dots &\\
		(2N-2)j! \frac{\partial}{\partial t_{(2N-2)j}} & (2N-4)j!\frac{\partial}{\partial t_{(2N-4)j}} & \dots  &\dots& 2j!\frac{\partial}{\partial t_{2j}} & N-1\\
		(2jN)! \frac{\partial}{\partial t_{2jN}} & (2N-2)j!\frac{\partial }{\partial t_{(2N-2)j}} & \dots  &\dots& 4j!\frac{\partial}{\partial t_{4j}} & 2j! \frac{\partial}{\partial t_{2j}} 
	\end{array}\right|.
\end{eqnarray*}
\section{Concluding remarks}
We have constructed the $\mathcal{R}(p,q)$-deformed Heisenberg-Virasoro algebra,  the $\mathcal{R}(p,q)$-Heisenberg-Witt $n$-algebra. Moreover, we have generalized the matrix models,  the elliptic hermitian matrix models and  presented  the $\mathcal{R}(p,q)$-differential operatos of the Virasoro algebra. Related particular cases have been deduced.
\section*{Acknowledgements}
This work is supported by {a} DAAD research stay, {reference
91819215. The research of RW is
funded by the Deutsche Forschungsgemeinschaft (DFG, German
Research Foundation) -- Project-ID 427320536 -- SFB 1442, as well as
under Germany's Excellence Strategy EXC 2044 390685587,
Mathematics M\"unster: Dynamics--Geometry--Structure.}

\end{document}